%
%
%

\documentclass[graybox]{svmult}


\usepackage{mathptmx}       
\usepackage{helvet}         
\usepackage{courier}        
\usepackage{type1cm}        
%
\usepackage{makeidx}         
\usepackage{graphicx}        
\usepackage{multicol}        
\usepackage[bottom]{footmisc}

\usepackage{amsfonts}

\def\l{\left}
\def\r{\right}

\def\R#1{$(\ref{#1})$}
\def\qed{\hspace{3mm}\hbox{\vrule height7pt width5pt}}

\newcommand{\bb}[1]{\begin{equation}\label{#1}}
\newcommand{\ee}{\end{equation}}
\newcommand{\bbb}{\begin{eqnarray}}
\newcommand{\eee}{\end{eqnarray}}
\newcommand{\bbbb}{\begin{eqnarray*}}
\newcommand{\eeee}{\end{eqnarray*}}

\newcommand{\nnn}{\nonumber}

\definecolor{green1}{rgb}{0.1,0.5,0.0}

\spnewtheorem{assume}{Assumption}{\bfseries}{\itshape}

\usepackage[numbers,sort&compress]{natbib}


\makeindex             


\begin{document}

\title*{Convergence of an operator splitting scheme for abstract stochastic evolution equations}
\author{Joshua L. Padgett and Qin Sheng}
\institute{Joshua L. Padgett \at Department of Mathematics and Statistics, Texas Tech University, Lubbock, TX 79409-1042, USA \email{joshua.padgett@ttu.edu}
\and Qin Sheng \at Department of Mathematics and
Center for Astrophysics, Space Physics and Engineering Research,
Baylor University, Waco, TX 76798-7328, USA \email{qin\_sheng@baylor.edu}}
%
%
\maketitle

\abstract*{
In this paper we study the convergence of a Lie-Trotter operator splitting for stochastic semilinear evolution 
equations in a Hilbert space. The abstract Hilbert space setting allows for the consideration of convergence 
of the approximation for both the original and spatially discretized problems. It is known that the strong 
convergence of this scheme is classically of half-order, at best. We demonstrate that this is in fact the optimal order 
of convergence in the proposed setting, with the actual order being dependent upon the regularity of noise
collected from applications.
}

\abstract{
In this paper we study the convergence of a Lie-Trotter operator splitting for stochastic semi-linear evolution 
equations in a Hilbert space. The abstract Hilbert space setting allows for the consideration of convergence 
of the approximation for both the original and spatially discretized problems. It is known that the strong 
convergence of this scheme is classically of half-order, at best. We demonstrate that this is in fact the optimal order 
of convergence in the proposed setting, with the actual order being dependent upon the regularity of noise
collected from applications.
}


\section{Introduction}
\label{sec:1}

Geometric integration techniques have received much attention in the study of differential 
equations \cite{hairer2006geometric,Blanes_book,Iserles1,Malham1}. In particular, operator 
splitting methods have been shown to be effective and efficient numerical methods, as they 
may often be constructed to preserve stability while being explicit with desirable convergence rates \cite{Hansen2008,Hansen2012,Josh1,Josh2,Sheng1989,Sheng1994}. While splitting methods 
have primarily been studied in the deterministic setting, there have been several recent studies regarding 
their efficacy in application to stochastic problems \cite{Josh3,misawa2000numerical,Burrage1,Misawa1}. 
In particular, it has been shown that the splitting of deterministic and stochastic counterparts of differential 
equations can prove effective by increasing convergence rates without the inclusion of derivative terms \cite{misawa2000numerical,Burrage1,cox2010convergence}. Moreover, it is known that operator splitting 
methods may preserve many desirable geometric properties of the true solution, including the 
monotonicity and positivity \cite{Hansen2012,iserles_2008,Josh3}.

Due to its wide range of applications in sciences and engineering, this article considers the following 
semi-linear stochastic differential equation problem,
\bbb
&& du = \l[Au + f(u)\r]dt + g(u)\,dW,\quad 0\le t\le T,\label{e1}\\
&& u(0) = u_0 \in H,\label{e2}
\eee
where $H$ is a separable Hilbert space. 
In the above, $A\,:\,\mbox{Dom}(A)\subset H \to H$ is a linear operator whose domain is dense in $H$ and compactly 
embedded into $H.$ We will further assume that $A$ generates an analytic semigroup $e^{tA},\ t\ge 0.$ The operators 
$f$ and $g$ are assumed to be Lipschitz continuous and possess continuous, uniformly bounded Fr{\'e}chet derivatives up to order two. 
These assumptions, and the precise analytic framework for \R{e1}-\R{e2} 
will be further outlined in Section 2. For technical reasons, we assume $u_0\in H$ to be deterministic.


Without loss of generality, we let $N\in \mathbb{N}$ be fixed, and define $h = 1/N.$ We are concerned with developing 
an approximation to the true solution to \R{e1}-\R{e2} at time $t_n=nh,$ denoted $u_n,$ being given by
\bb{s1}
u_n = S^n(u_0),
\ee
where $S\,:\,H\to H$ is the nonlinear operator defined as
\bb{s2}
S \mathrel{\mathop:}= e^{hA}e^{hf}e^{\Delta W(h)g}.
\ee
The nonlinear operator $v(t) = e^{hf}(v_0)$ is the solution to the differential equation $dv = f(v)\,dt$ at time $t$ with initial 
condition $v(0) = v_0,$ while $z(t) = e^{\Delta W(h)g}(z_0)$ is the solution to the stochastic differential equation 
$dz = g(z)\,dW$ at time $t$ with initial condition $z(0) = z_0.$ Such operators are often referred to as the nonlinear 
semigroup for each problem \cite{kato1967nonlinear}.

The splitting scheme given by \R{s1} and \R{s2} is classically known as the Lie-Trotter splitting scheme and has been 
well-studied in numerous settings \cite{Trotter1959,iserles_2008,Hansen2012,Jahnke2000}. Such methods have been 
studied in the finite-dimensional stochastic setting for ordinary differential equations via Lie algebraic techniques 
\cite{misawa2000numerical,Burrage1,Misawa1}. There has also been a recent study of such problems for linear 
equations with additive noise in UMD Banach spaces \cite{cox2010convergence}. In this study, the optimal convergence 
rate was recovered, while the effects of nonlinearities were not included.
However, the inclusion of nonlinear multiplicative noise terms complicates the required analysis and becomes one of the
 concerns of this current article.

This article is organized as follows. In Section 2, the abstract setting utilized throughout the article is detailed with several 
necessary results recalled. Section 3 outlines several basic properties regarding stability issues of the proposed operator splitting 
scheme. Section 4 is concerned with a detailed consistency analysis while Section 5 demonstrates the desired convergence result. 

\section{Abstract Stochastic Evolution Problems}
\label{sec:2}
Let $H$ be a separable Hilbert space with inner product $\langle \cdot,\cdot\rangle$ and associated norm $\|\cdot\| = \langle \cdot,\cdot\rangle^{1/2}.$ 
For another Hilbert space $U$ equipped with norm $\|\cdot\|_U,$ we denote by $L(U,H)$ the set of bounded linear operators from $U$ to $H.$
For the simplicity of of notations, we let $L(U,U) = L(U).$ Further, we denote by $L_1(U,H)$ the set of nuclear operators from $U$ to $H$ 
and $L_2(U,H)$ the set of Hilbert-Schmidt operators from $U$ to $H.$ Further, if $\{e_i\}_{i\in\mathbb{N}}$ forms an arbitrary orthonormal 
basis of $H,$ then we have the following norms associated with the aforementioned spaces:
$$
\|\Gamma\|_{L_1(U)} \mathrel{\mathop:}= \mbox{Tr}\l(\Gamma^*\Gamma\r)^{1/2} = \sum_{i = 1}^\infty \langle \l(\Gamma^*\Gamma\r)^{1/2}e_i,e_i\rangle,
~~\Gamma\in L_1(U),
$$
and 
$$
\|\Gamma\|_{L_2(U)}^2 \mathrel{\mathop:}= \sum_{i=1}^\infty \|\Gamma e_i\|_U^2,~~\Gamma\in L_2(U),
$$
where $\Gamma^*$ denotes the adjoint of $\Gamma.$ We further let $\mathbb{E}\|\cdot\|_{L_1(H)}$ and $\mathbb{E}\|\cdot\|_{L_2(H)}$ denote the corresponding
expected values of each norm. Moreover, the trace and Hilbert-Schmidt norms are independent of the given basis.

Let $(\Omega,\mathcal{F},\mathbb{P})$ be a probability space with normal filtration $\{\mathcal{F}(t)\}_{t\ge 0},$ and let $W(t)$ be a standard Wiener process with covariance operator $Q,$
where $Q\,:\,H\to H$ is a positive self-adjoint operator. 
If $q_i>0$ are the eigenvalues of $Q$ corresponding to eigenfunctions $e_i,$ $i\in\mathbb{N},$ we then have
$$W(t)\mathrel{\mathop:}= \sum_{i\in\mathbb{N}} \sqrt{q_i}\beta_i(t)e_i,\quad 0\le t\le T,$$
where $\{\beta_i\}_{i\in\mathbb{N}}$ are independent, real-valued Brownian motions on the probability space. 

We denote the set of Hilbert-Schmidt operators from $Q^{1/2}(H)$ to $H$ by $L_2^0(H)$ and its norm, for $\Gamma\in L_2^0(H),$ is given by
$$
\|\Gamma\|_{L_2^0(H)} \mathrel{\mathop:}= \|Q^{1/2}\Gamma\|_{L_2(H)} = \l(\sum_{i=1}^\infty \|Q^{1/2}(\Gamma^*\Gamma)^{1/2}e_i\|^2\r)^{1/2}.
$$
Now let $\varphi\,:[0,T]\times \Omega \to L_2^0(H)$ be an $L_2^0(H)-$valued predictable stochastic process with
$$\int_0^t \mathbb{E}\|Q^{1/2}\varphi\|_{L_2(H)}^2 \,ds <\infty,\quad 0\le t\le T,$$
then Ito's isometry (see, for instance, \cite{pz_2014}) gives
$$
\mathbb{E}\l\|\int_0^t \varphi \,dW\r\|^2 = \int_0^t \mathbb{E} \|\varphi\|_{L_2^0(H)}^2\,ds = \int_0^t \mathbb{E}\|Q^{1/2}\varphi\|^2_{L_2(H)}\,ds,\quad 0\le t\le T.
$$

We now recall some basic properties of Hilbert space operators that will be of interest throughout this work.
\begin{proposition}
Let $\Gamma, \Gamma_1, \Gamma_2$ be three operators in Hilbert spaces. Then we have the following results.
\begin{itemize}
\item[i.] If $\Gamma\in L_1(U),$ then
$$
|\mbox{Tr}(\Gamma)|\le \|\Gamma\|_{L_1(U)}.
$$
\item[ii.] If $\Gamma_1\in L(U)$ and $\Gamma_2\in L_1(U),$ then both $\Gamma_1\Gamma_2$ and $\Gamma_2\Gamma_1$ belong to $L_1(U)$ with
$$
\mbox{Tr}(\Gamma_1\Gamma_2) = \mbox{Tr}(\Gamma_2\Gamma_1).
$$
\item[iii.] If $\Gamma_1\in L(U,H)$ and $\Gamma_2\in L(H,U),$ then $\Gamma_1\Gamma_2\in L_1(H)$ with
$$
\|\Gamma_1\Gamma_2\|_{L_1(H)} \le \|\Gamma_1\|_{L_2(U,H)}\|\Gamma_2\|_{L_2(H,U)}.
$$
\item[iv.] If $\Gamma\in L_2(U,H),$ then $\Gamma^*\in L_2(H,U)$ with
$$
\|\Gamma^*\|_{L_2(H,U)} = \|\Gamma\|_{L_2(U,H)}.
$$
\item[v.] If $\Gamma\in L(U,H)$ and $\Gamma_1,\Gamma_2\in L_i(U),\ i=1,2,$ then
$\Gamma\Gamma_1,\Gamma\Gamma_2\in L_i(U,H),\ i=1,2,$ with
$$
\|\Gamma\Gamma_i\|_{L_j(U,H)} \le \|\Gamma\|_{L(U,H)}\|\Gamma_i\|_{L_j(H)},\quad i=1,2,\ j=1,2.
$$
\end{itemize}
\end{proposition}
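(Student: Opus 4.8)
The proposition collects five standard facts about trace-class and Hilbert–Schmidt operators, so my plan is to prove each item in turn, relying only on the definitions of the norms $\|\cdot\|_{L_1(U)}$ and $\|\cdot\|_{L_2(U,H)}$ recalled above, together with the spectral theorem for the nonnegative self-adjoint operator $(\Gamma^*\Gamma)^{1/2}$ and the polar decomposition $\Gamma = V(\Gamma^*\Gamma)^{1/2}$ with $V$ a partial isometry. For item~(i), I would diagonalize $|\Gamma| \mathrel{\mathop:}= (\Gamma^*\Gamma)^{1/2}$, write $\Gamma = V|\Gamma|$, expand $\mathrm{Tr}(\Gamma) = \sum_i \langle V|\Gamma|e_i, e_i\rangle$ in the eigenbasis $\{e_i\}$ of $|\Gamma|$, and apply Cauchy–Schwarz together with $\|Ve_i\|\le 1$ and $\langle |\Gamma| e_i, e_i\rangle \ge 0$ to bound $|\mathrm{Tr}(\Gamma)|$ by $\sum_i \langle |\Gamma| e_i, e_i\rangle = \|\Gamma\|_{L_1(U)}$.

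For item~(ii), the cyclicity $\mathrm{Tr}(\Gamma_1\Gamma_2) = \mathrm{Tr}(\Gamma_2\Gamma_1)$ when one factor is bounded and the other trace-class is classical: expand both traces in a fixed orthonormal basis, insert resolutions of the identity, and justify the interchange of the resulting double sums by absolute convergence, which follows from $\Gamma_2 \in L_1(U)$ and $\Gamma_1 \in L(U)$; the membership of $\Gamma_1\Gamma_2$ and $\Gamma_2\Gamma_1$ in $L_1(U)$ follows from the ideal property, which in turn is a special case of item~(v). Item~(iv) is immediate: for $\Gamma \in L_2(U,H)$ one has $\|\Gamma^*\|_{L_2(H,U)}^2 = \sum_i \|\Gamma^* e_i\|_U^2 = \sum_{i,j} |\langle \Gamma^* e_i, f_j\rangle_U|^2 = \sum_{i,j} |\langle e_i, \Gamma f_j\rangle|^2 = \sum_j \|\Gamma f_j\|^2 = \|\Gamma\|_{L_2(U,H)}^2$, where $\{e_i\}$, $\{f_j\}$ are orthonormal bases of $H$, $U$ respectively and the double sum is rearranged by Tonelli.

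For item~(iii), I would use the Hilbert–Schmidt characterization: pick an orthonormal basis $\{g_k\}$ of $U$, write $\|\Gamma_1\Gamma_2\|_{L_1(H)} \le \sum_k \|\Gamma_1 g_k\|\,\|\Gamma_2^* g_k\|$ — this bound on the nuclear norm of a sum of rank-one operators $\Gamma_1\Gamma_2 = \sum_k (\Gamma_1 g_k)\langle \Gamma_2^* g_k, \cdot\rangle$ (valid since $\Gamma_2 x = \sum_k \langle x, \Gamma_2^* g_k\rangle g_k$) follows from the triangle inequality for $\|\cdot\|_{L_1}$ applied to rank-one pieces — and then apply Cauchy–Schwarz in $k$ to get $\bigl(\sum_k \|\Gamma_1 g_k\|^2\bigr)^{1/2}\bigl(\sum_k \|\Gamma_2^* g_k\|^2\bigr)^{1/2} = \|\Gamma_1\|_{L_2(U,H)}\|\Gamma_2^*\|_{L_2(U,H)}$, finishing with item~(iv) to replace $\|\Gamma_2^*\|_{L_2(U,H)}$ by $\|\Gamma_2\|_{L_2(H,U)}$. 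Finally, item~(v) follows the same template: using the polar decomposition of $\Gamma_i$ and the eigen-expansion of $|\Gamma_i|$, one checks $\|\Gamma\Gamma_i\|_{L_j(U,H)} \le \|\Gamma\|_{L(U,H)}\|\Gamma_i\|_{L_j(H)}$ directly from the series defining each norm, since left-multiplication by a bounded operator only scales each term by at most $\|\Gamma\|_{L(U,H)}$. The main obstacle, such as it is, is purely bookkeeping: making sure every operator lands in the space its norm is being measured in, keeping the source and target Hilbert spaces straight in the Hilbert–Schmidt norms, and rigorously justifying each interchange of summation by absolute/monotone convergence rather than waving at it; none of the individual estimates is deep, but item~(iii) requires the most care in setting up the rank-one decomposition correctly.
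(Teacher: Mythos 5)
The paper does not actually prove this proposition: it states the five facts and defers entirely to the cited references (\cite{chow2014stochastic,prevot2007concise}), so there is no in-paper argument to compare yours against. Your sketch is the standard textbook route, and items (i), (iii), (iv) and (v) are sound as outlined: polar decomposition plus Cauchy--Schwarz for (i), the rank-one decomposition $\Gamma_1\Gamma_2=\sum_k(\Gamma_1 g_k)\otimes(\Gamma_2^*g_k)$ with the triangle inequality and (iv) for (iii), Tonelli for (iv), and the polar/eigen-expansion of $|\Gamma_i|$ for (v) (noting that for $j=1$ you are implicitly using the variational characterization of the nuclear norm as an infimum over rank-one decompositions, not literally the defining series, since $((\Gamma\Gamma_i)^*\Gamma\Gamma_i)^{1/2}$ is not termwise comparable to $(\Gamma_i^*\Gamma_i)^{1/2}$).

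The one genuine weak spot is item (ii). You justify the interchange of the double sums by claiming absolute convergence "follows from $\Gamma_2\in L_1(U)$ and $\Gamma_1\in L(U)$." That is not true in general: the naive double series $\sum_{i,j}\langle\Gamma_2 e_i,e_j\rangle\langle\Gamma_1 e_j,e_i\rangle$ need not converge absolutely for a bounded $\Gamma_1$ and trace-class $\Gamma_2$ (one can arrange divergence with a rank-one $\Gamma_2$ whose vector has slowly decaying coefficients and a block-unitary $\Gamma_1$ with spread-out matrix entries). The standard fix is either to factor $\Gamma_2=BC$ with $B=V|\Gamma_2|^{1/2}$ and $C=|\Gamma_2|^{1/2}$ both Hilbert--Schmidt and reduce to cyclicity for a product of two Hilbert--Schmidt operators, where Cauchy--Schwarz does give absolute convergence of the double sum, or to verify $\mbox{Tr}(\Gamma_1\Gamma_2)=\mbox{Tr}(\Gamma_2\Gamma_1)$ for rank-one $\Gamma_2$ directly and extend by linearity and continuity of the trace in the $L_1(U)$ norm, using item (i) and the ideal property. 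Either repair is one line, but as written the convergence claim is the step that would fail.
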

More details on the proposition and the spaces used can be found in \cite{chow2014stochastic,prevot2007concise}.






We now outline several assumptions necessary for the existence, uniqueness, and well-poseness of the 
solution to \R{e1}-\R{e2}. 
\begin{assume}
The linear operator $A\,:\,\mbox{Dom}(A)\subset H\to H$ is the generator of a bounded $C_0$ semigroup $e^{tA},\ t\ge 0.$
\end{assume}
Without loss of generality, by Assumption 1, it follows that we may assume that
$$\|e^{tA}\| \le 1,\quad t\ge 0.$$
We now outline some basic properties of the semigroup generated by $A$ (see, for instance, \cite{henry2006geometric}).

\begin{proposition}
Let $\alpha\ge 0$ and $0\le \gamma\le 1.$ Then there exists a constant $C>0$ such that
\begin{itemize}
\item[i.] $\|(-A)^\alpha e^{tA}\|_{L(H)} \le Ct^{-\alpha},$ for $t>0,$
\item[ii.] $(-A)^\alpha e^{tA} = e^{tA}(-A)^\alpha,$ on $\mbox{Dom}((-A)^\alpha),$
\item[iii.] If $\alpha\ge \gamma,$ then $\mbox{Dom}((-A)^\alpha) \subset \mbox{Dom}((-A)^\gamma).$
\end{itemize}
\end{proposition}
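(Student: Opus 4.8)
The plan is to treat (i)--(iii) as the standard properties of the fractional powers of the generator of an analytic semigroup, where the only genuinely analytic input is the estimate (i), while (ii) and (iii) will follow formally from the definition. I would first record the setting: beyond Assumption 1, the Introduction assumes that $A$ generates an \emph{analytic} semigroup, so that $-A$ is sectorial, i.e.\ there exist $M\ge 1$ and an angle $\vartheta\in(\pi/2,\pi)$ with $\|(\lambda I-A)^{-1}\|_{L(H)}\le M/|\lambda|$ for all $\lambda$ in the sector $\Sigma:=\{\lambda\neq 0:|\arg\lambda|<\vartheta\}$. After the standard shift $A\mapsto A-\omega I$ (which on the finite interval $[0,T]$ changes the stated bounds only by a factor $e^{\omega t}\le e^{\omega T}$ absorbed into $C$), I may assume $0\in\rho(A)$, so that for $\alpha>0$ the negative fractional power
\[
(-A)^{-\alpha}:=\frac{1}{\Gamma(\alpha)}\int_0^\infty s^{\alpha-1}e^{sA}\,ds\in L(H)
\]
is a bounded, injective operator; I then set $(-A)^\alpha:=\big((-A)^{-\alpha}\big)^{-1}$, which is closed and densely defined with $\mbox{Dom}((-A)^\alpha)=\mbox{Range}((-A)^{-\alpha})$, and $(-A)^0:=I$.

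For (i) I would argue through the Dunford representation. Since $-A$ is sectorial, for $t>0$ one has $e^{tA}=\frac{1}{2\pi i}\int_{\Gamma_0}e^{\lambda t}(\lambda I-A)^{-1}\,d\lambda$, where $\Gamma_0\subset\Sigma$ is the usual contour built from two rays $\{re^{\pm i\psi}:r\ge\rho\}$ with $\pi/2<\psi<\vartheta$ joined by a circular arc. A holomorphic-calculus computation moves $(-A)^\alpha$ inside the integral to give
\[
(-A)^\alpha e^{tA}=\frac{1}{2\pi i}\int_{\Gamma_0}(-\lambda)^\alpha e^{\lambda t}(\lambda I-A)^{-1}\,d\lambda .
\]
Taking norms and using the resolvent bound gives $\|(-A)^\alpha e^{tA}\|_{L(H)}\le\frac{M}{2\pi}\int_{\Gamma_0}|\lambda|^{\alpha-1}|e^{\lambda t}|\,|d\lambda|$; the substitution $\mu=\lambda t$ then extracts a factor $t^{-\alpha}$ and leaves an integral that converges (because $\mathrm{Re}\,\mu\to-\infty$ along the rays, as $\psi>\pi/2$) and is bounded uniformly in $t$, giving $\|(-A)^\alpha e^{tA}\|_{L(H)}\le Ct^{-\alpha}$. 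An equivalent route would first settle the integer case $\alpha=n$ by applying Cauchy's estimate to the analytic map $t\mapsto e^{tA}$ (yielding $\|(-A)^n e^{tA}\|\le C_n t^{-n}$) and then interpolate via the moment inequality $\|(-A)^\alpha x\|\le C\|(-A)^n x\|^{\alpha/n}\|x\|^{1-\alpha/n}$.

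For (ii) and (iii) I would only use the algebra of the definition. The semigroup law $e^{tA}e^{sA}=e^{sA}e^{tA}$ shows that $e^{tA}$ commutes with the defining integral, hence with $(-A)^{-\alpha}$; passing to inverses yields $(-A)^\alpha e^{tA}=e^{tA}(-A)^\alpha$ on $\mbox{Dom}((-A)^\alpha)$, which is (ii). For (iii), the additivity law $(-A)^{-\alpha}=(-A)^{-\gamma}(-A)^{-(\alpha-\gamma)}$ for $\alpha\ge\gamma\ge 0$ (immediate from the integral definition together with Fubini) shows that any $x=(-A)^{-\alpha}y$ can be written $x=(-A)^{-\gamma}\big[(-A)^{-(\alpha-\gamma)}y\big]$, so $x\in\mbox{Range}((-A)^{-\gamma})=\mbox{Dom}((-A)^\gamma)$, which is the claimed inclusion.

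The hardest step will be (i), and within it the single analytic ingredient is the sectorial resolvent bound $\|(\lambda I-A)^{-1}\|\le M/|\lambda|$, which is exactly the analyticity of the semigroup and the only place the geometry of $\Sigma$ is used. The delicate point is making the functional-calculus manipulation rigorous: justifying the interchange of $(-A)^\alpha$ with the contour integral and verifying, for non-integer $\alpha$, the convergence of the resulting integral with the correct branch of $(-\lambda)^\alpha$ on $\Gamma_0$. By contrast, (ii) and (iii) are purely formal consequences of the definition and require no analyticity at all.
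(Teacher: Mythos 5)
The paper offers no proof of this proposition: it is quoted as a standard fact with a pointer to Henry's monograph \cite{henry2006geometric}, and your argument --- the integral definition of $(-A)^{-\alpha}$ after a shift guaranteeing $0\in\rho(A)$, the contour/functional-calculus bound for (i), commutation of $e^{tA}$ through the defining integral for (ii), and additivity of the negative powers for (iii) --- is precisely the canonical proof found there (and in Pazy), so it is correct and consistent with what the paper relies on. The one point genuinely worth care is the one you already flag: the paper's bare assumption $\|e^{tA}\|\le 1$ does not make $\int_0^\infty s^{\alpha-1}e^{sA}\,ds$ converge, so the shift $A\mapsto A-\omega I$ (harmless on the finite horizon $[0,T]$, with $e^{\omega T}$ absorbed into $C$) is actually needed, not merely cosmetic.
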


Recall \R{e1}. For nonlinear terms $f$ and $g,$ we need following restrictions. 
\begin{assume}
For the drift term $f\,:\,H\to H,$ assume that there exists a positive constant $L_f>0$ such that $f$ satisfies the following Lipschitz condition
$$
\|f(u)-f(v)\| \le L_f\|u-v\|,\quad \mbox{for all}\ u,v\in H.
$$
This yields the following growth condition:
$$
\|f(u)\| \le C(1+\|u\|),\quad \mbox{for all}\ u\in H.
$$
We further assume that the derivatives $Df[u]\,:\,H\to H$ and $D^2f[u]\,:\,H\times H\to H$ are continuous and uniformly bounded for all $u\in H.$ 
\end{assume}

\begin{assume}
For the diffusion term $g\,:\,H\to L_2^0(H),$ assume that there exists a positive constant $L_g>0$ such that $g$ 
satisfies the following Lipschitz condition
$$
\|g(u)-g(v)\|_{L_2^0(H)} \le L_g\|u-v\|,\quad \mbox{for all}\ u,v\in H.
$$
Similarly, the above leads to the growth condition:
$$
\|g(u)\|_{L_2(H)} \le C(1+\|u\|),\quad \mbox{for all}\ u\in H.
$$
We further assume that the derivatives $Dg[u]\,:\,H\to L_2^0(H)$ and $D^2g[u]\,:\,H\times H\to L_2^0(H)$ are continuous and uniformly bounded for all $u\in H.$
\end{assume}

In order to guarantee the existence of a well-defined mild solution to \R{e1}-\R{e2}, we must also invoke a standard regularity assumption on the covariance operator of the noise $W.$
\begin{assume}
Assume that there exists $\beta \in (0,1]$ and $C>0$ such that
\bb{normcond}
\l\|(-A)^{(\beta -1)/2}Q^{1/2}\r\|_{L_2(H)} = \l\|Q^{1/2}(-A)^{(\beta -1)/2}\r\|_{L_2(H)} \le C.
\ee
\end{assume}
In the following analysis, any reference to a parameter $\beta$ is the same $\beta$ defined in \R{normcond}.

If Assumptions 1-4 are satisfied and $u_0\in H$ is $\mathcal{F}_0-$measurable, then
it follows that \R{e1}-\R{e2} 
admits a unique (up to the equivalence of paths) mild solution $u\,:\,[0,T]\times \Omega \to H$ with continuous sample path given by
\bb{mildsol}
u(t) = e^{tA}u_0 + \int_0^t e^{(t-s)A}f(u(s))\,ds + \int_0^t e^{(t-s)A}g(u(s))\,dW(s),\quad \mathbb{P}-a.s.,
\ee
with the expectation
\bb{mildsolnorm}
\mathbb{E}\|u(t)\|^2 <\infty,\quad 0\le t\le T,
\ee
(see \cite{pz_2014}).

Let the Banach space $\mbox{Dom}((-A)^{\alpha/2}), \alpha\ge 0,$ be equipped with the standard norm given 
by $\|\cdot\|_\alpha \mathrel{\mathop:}= \|(-A)^{\alpha/2}\cdot\|.$ Then we have the following regularity result for the solution to \R{e1}-\R{e2} \cite{lord2012stochastic}. 

\begin{theorem}
Assume that Assumptions 1-4 hold. Let $u$ be the mild solution to \R{e1}-\R{e2} given by \R{mildsol}. 
If $u_0 \in L^2(\Omega,\mbox{Dom}((-A)^{\alpha/2})),\ \alpha\in[0,1),$ then for all $0\le t\le T,$  
$u\in L^2(\Omega,\mbox{Dom}((-A)^{\alpha/2}))$ and
$$\sup_{0\le t\le T}\l(\mathbb{E}\|u(t)\|^2_\alpha\r)^{1/2} \le C\l(1+\l(\mathbb{E}\|u_0\|_\alpha^2\r)^{1/2}\r).$$
\end{theorem}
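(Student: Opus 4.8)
The plan is to estimate $\mathbb{E}\|u(t)\|_\alpha^2$ directly from the mild solution formula \R{mildsol} by applying $(-A)^{\alpha/2}$ to each of the three terms and bounding them separately, then closing the argument with a Gr\"onwall-type inequality. First I would treat the linear term: since $(-A)^{\alpha/2}$ commutes with $e^{tA}$ on $\mbox{Dom}((-A)^{\alpha/2})$ (Proposition, part ii) and $\|e^{tA}\|\le 1$, we get $\mathbb{E}\|e^{tA}u_0\|_\alpha^2 = \mathbb{E}\|e^{tA}(-A)^{\alpha/2}u_0\|^2 \le \mathbb{E}\|u_0\|_\alpha^2$. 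For the deterministic convolution term, I would write $(-A)^{\alpha/2}e^{(t-s)A} = (-A)^{\alpha/2}e^{((t-s)/2)A}\,e^{((t-s)/2)A}$ and use the smoothing estimate (Proposition, part i), $\|(-A)^{\alpha/2}e^{\tau A}\|_{L(H)}\le C\tau^{-\alpha/2}$, together with the linear growth bound on $f$ from Assumption 2; after a Cauchy--Schwarz step in $s$ this produces a term of the form $C\int_0^t (t-s)^{-\alpha/2}\bigl(1+\mathbb{E}\|u(s)\|^2\bigr)\,ds$, and since $\alpha<1$ the singularity $(t-s)^{-\alpha/2}$ is integrable.

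The stochastic convolution is the term that requires the most care, and this is where the regularity Assumption 4 enters. I would apply It\^o's isometry (as recalled in Section 2) to get
\bb{stochterm}
\mathbb{E}\l\|\int_0^t (-A)^{\alpha/2}e^{(t-s)A}g(u(s))\,dW(s)\r\|^2 = \int_0^t \mathbb{E}\l\|(-A)^{\alpha/2}e^{(t-s)A}g(u(s))Q^{1/2}\r\|_{L_2(H)}^2\,ds.
\ee
Inside the Hilbert--Schmidt norm I would insert the factorization $(-A)^{\alpha/2}e^{(t-s)A}g(u(s))Q^{1/2} = \bigl[(-A)^{(\alpha-\beta+1)/2}e^{(t-s)A}\bigr]\,g(u(s))\,\bigl[(-A)^{(\beta-1)/2}Q^{1/2}\bigr]$, provided the exponent $(\alpha-\beta+1)/2$ is nonnegative and at most something the smoothing estimate tolerates. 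Then the first bracket is bounded in $L(H)$ by $C(t-s)^{-(\alpha-\beta+1)/2}$ via part i of the semigroup proposition, the middle factor is controlled in $L_2^0(H)$ by the growth condition on $g$ in Assumption 3, and the last bracket is bounded by the constant $C$ from \R{normcond}; combining these with part v of the first Proposition gives an integrand of order $(t-s)^{-(\alpha-\beta+1)/2}\bigl(1+\mathbb{E}\|u(s)\|^2\bigr)$. Since $\alpha<1\le$ and $\beta\le 1$, the exponent satisfies $(\alpha-\beta+1)/2 < 1$, so again the time singularity is integrable over $[0,t]$.

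Collecting the three estimates and using Theorem 1's a priori bound $\sup_{0\le t\le T}\mathbb{E}\|u(t)\|^2 \le C(1+\mathbb{E}\|u_0\|^2) \le C(1+\mathbb{E}\|u_0\|_\alpha^2)$ to absorb the $\mathbb{E}\|u(s)\|^2$ factors (here the earlier regularity theorem for $\alpha=0$ does the job, or one runs the whole argument simultaneously), I would arrive at
\bbbb
\mathbb{E}\|u(t)\|_\alpha^2 \le C\bigl(1+\mathbb{E}\|u_0\|_\alpha^2\bigr) + C\int_0^t \bigl[(t-s)^{-\alpha/2} + (t-s)^{-(\alpha-\beta+1)/2}\bigr]\,\mathbb{E}\|u(s)\|_\alpha^2\,ds.
\eeee
A singular (weakly singular) Gr\"onwall inequality then yields $\sup_{0\le t\le T}\mathbb{E}\|u(t)\|_\alpha^2 \le C(1+\mathbb{E}\|u_0\|_\alpha^2)$, and taking square roots gives the claim. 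The main obstacle is the bookkeeping on the fractional powers in the stochastic term: one must verify that the exponent $(\alpha-\beta+1)/2$ is legitimately in the range $[0,1)$ where both part i of the semigroup proposition applies and the time integral converges; the constraint $\alpha\in[0,1)$ is exactly what makes this work, and it is worth isolating as the crux of the estimate.
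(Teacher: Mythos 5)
First, a point of comparison: the paper does not prove this theorem at all --- it is quoted from the reference \cite{lord2012stochastic} --- so your proposal has to stand on its own. Your overall strategy (apply $(-A)^{\alpha/2}$ to the three terms of \R{mildsol}, use the smoothing estimate of Proposition 2(i) and It\^o's isometry, close with a singular Gr\"onwall argument) is the standard and correct one, and your treatment of the initial-value term and of the deterministic convolution is fine. The gap is in the stochastic convolution. The factorization
$(-A)^{\alpha/2}e^{(t-s)A}g(u(s))Q^{1/2}=\bigl[(-A)^{(\alpha-\beta+1)/2}e^{(t-s)A}\bigr]\,g(u(s))\,\bigl[(-A)^{(\beta-1)/2}Q^{1/2}\bigr]$
silently commutes the fractional power $(-A)^{(\beta-1)/2}$ past the operator $g(u(s))$; nothing in Assumptions 1--6 licenses this, and for a general Lipschitz $g:H\to L_2^0(H)$ it is false. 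Moreover, even granting the factorization, It\^o's isometry places the \emph{square} of the Hilbert--Schmidt norm under the time integral, so your estimate actually produces the integrand $(t-s)^{-(\alpha-\beta+1)}\bigl(1+\mathbb{E}\|u(s)\|^2\bigr)$, not $(t-s)^{-(\alpha-\beta+1)/2}(\cdots)$; integrability then forces $\alpha<\beta$, which is strictly weaker than the claimed range $\alpha\in[0,1)$ whenever $\beta<1$ (e.g.\ space--time white noise, where $\beta<1/2$). The squaring is harmless in the deterministic term only because Cauchy--Schwarz splits the exponent there.

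The repair is simpler than what you attempted, and it shows that Assumption 4 is not actually needed for this step under the paper's hypotheses on $g$. Since $g:H\to L_2^0(H)$ is Lipschitz in the $L_2^0(H)$-norm, one has the linear growth bound $\|g(u)\|_{L_2^0(H)}\le \|g(0)\|_{L_2^0(H)}+L_g\|u\|\le C(1+\|u\|)$, and hence
$\|(-A)^{\alpha/2}e^{(t-s)A}g(u(s))\|_{L_2^0(H)}\le \|(-A)^{\alpha/2}e^{(t-s)A}\|_{L(H)}\,\|g(u(s))\|_{L_2^0(H)}\le C(t-s)^{-\alpha/2}\bigl(1+\|u(s)\|\bigr)$,
whose square $(t-s)^{-\alpha}\bigl(1+\|u(s)\|\bigr)^2$ is integrable on $[0,t]$ precisely because $\alpha<1$ --- this is where the hypothesis $\alpha\in[0,1)$ is really used. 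After invoking the a priori bound \R{mildsolnorm} (equivalently, the $\alpha=0$ case) to control $\mathbb{E}\|u(s)\|^2\le C(1+\mathbb{E}\|u_0\|^2)\le C(1+\mathbb{E}\|u_0\|_\alpha^2)$, the convolution terms are bounded outright and no Gr\"onwall step is required; if you prefer to run the argument self-consistently in the $\alpha$-norm, note that your final integral inequality puts $\mathbb{E}\|u(s)\|_\alpha^2$ under the integral while the growth conditions only ever produce $\mathbb{E}\|u(s)\|^2$, so you should either justify this via the embedding $\mbox{Dom}((-A)^{\alpha/2})\hookrightarrow H$ or keep the integrand as $1+\mathbb{E}\|u(s)\|^2$.
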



In addition, we employ two more assumptions. 
\begin{assume}
We have $\mbox{Dom}(A)\subset H$ and $\mbox{Dom}(A^2)\subset H$ are both invariant under $f$ and $g,$ with 
$\mbox{Dom}(A)$ also being invariant under $Df$ and $Dg,$ for all $u\in \mbox{Dom}(A).$
\end{assume}

\begin{assume}
Let $\beta\in (0,1]$ be defined as in \R{normcond}. Then we assume that there exists a constant $C>0$ such that
$$\|(-A)^{(\beta-1)/2}Dg[\xi](u-v)\|_{L_2^0(H)} \le C\|u-v\|$$
and
$$\|(-A)^{(\beta-1)/2}D^2[\xi](u-v)^2\|_{L_2^0(H)} \le C\|u-v\|,$$
for all $\xi,u,v\in H.$
\end{assume}

Assumption 6 initially appears to be restrictive. However, since $\beta\in (0,1],$ the assumption 
actually allows for the derivatives $Dg$ and $D^2g$ to be slightly less regular.







Throughout this article, we will denote function and operator composition by left multiplication. That is, for two operators 
$F_1$ and $F_2,$ we use the standard notation
$$F_1F_2(u) = F_1(F_2(u)),$$
whenever the composition in consideration is well-defined. Furthermore, throughout this article, we let $C>0$ represent 
a generic constant independent of $n$ and $h.$ Note that this constant may assume different values throughout arguments.

In order to avoid repetition, it is henceforth assumed that Assumptions 1-6 hold throughout the remainder of the article. 
It is worth noting that Assumption 4 is quite standard and allows for the consideration of both space-time and trace class white noise. Space-time white noise corresponds to $Q = I$ and it is known that \R{normcond} is satisfied when $\beta<1/2,$ in the case of one spatial dimension. When considering trace class noise, that is when $\mbox{Tr}(Q)<\infty,$ it follows that \R{normcond} is satisfied for $\beta=1$ \cite{debussche2011weak}. By considering trace class noise, we are able to recover the results presented in \cite{misawa2000numerical,Misawa1}. 



\section{Properties of the Splitting Operator}
\label{sec:3}

We first define the {\em least upper bound (lub) Lipschitz constant} and {\em (lub) logartihmic Lipschitz constant} of a 
function $F\,:\,H\to H$ by
$$
L[F]\mathrel{\mathop:}= \sup_{u\neq v}\frac{\|F(u)-F(v)\|}{\|u-v\|}
$$
and
$$
M[F] \mathrel{\mathop:}= \lim_{k\to 0^+} \frac{L[I+kF]-1}{k},
$$
respectively. For the following lemmas, we will consider the following problems:
\bb{pf1}
dv = f(v)\,dt,\quad v(0) = v_0,
\ee
and
\bb{ppf1}
dv = g(v)\,dW,\quad v(0) = v_0.
\ee

\begin{lemma}
Let $v(t) = e^{hf}(v_0)$ be the solution to \R{pf1}. It then follows that
$$L[e^{hf}] \le e^{hL_f}.$$
\end{lemma}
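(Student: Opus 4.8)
The plan is to estimate the difference of two solution trajectories of \R{pf1} using Gr\"onwall's inequality, exploiting the Lipschitz bound on $f$ from Assumption 2. Let $v(t)$ and $w(t)$ denote the solutions of \R{pf1} with initial data $v_0$ and $w_0$ respectively; since the semigroup $e^{hf}$ is defined by $v(h)=e^{hf}(v_0)$, it suffices to bound $\|v(h)-w(h)\|$ by $e^{hL_f}\|v_0-w_0\|$. First I would write the integral form $v(t)-w(t) = (v_0-w_0) + \int_0^t \l(f(v(s))-f(w(s))\r)\,ds$, take norms, and apply the Lipschitz estimate $\|f(v(s))-f(w(s))\|\le L_f\|v(s)-w(s)\|$ to obtain $\|v(t)-w(t)\|\le \|v_0-w_0\| + L_f\int_0^t \|v(s)-w(s)\|\,ds$.

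Next I would invoke the integral form of Gr\"onwall's inequality to conclude $\|v(t)-w(t)\|\le \|v_0-w_0\|\,e^{L_f t}$ for all $t\ge 0$, and in particular at $t=h$. Dividing by $\|v_0-w_0\|$ and taking the supremum over $v_0\neq w_0$ gives $L[e^{hf}]\le e^{hL_f}$, which is the claim. An alternative (and essentially equivalent) route is to differentiate $\tfrac12\tfrac{d}{dt}\|v(t)-w(t)\|^2 = \langle f(v)-f(w), v-w\rangle \le L_f\|v-w\|^2$, which yields the same exponential bound and connects more directly to the logarithmic Lipschitz constant $M[f]\le L_f$ introduced just above the lemma; I expect the authors may prefer this formulation since $M[\cdot]$ is defined in the surrounding text.

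The main technical point to be careful about — rather than a genuine obstacle — is justifying that the trajectories $v(t)$, $w(t)$ are well-defined, absolutely continuous, and remain in $H$ so that the integral manipulations and the differentiation of $\|v-w\|^2$ are legitimate; this follows from the global Lipschitz and linear growth conditions on $f$ in Assumption 2, which guarantee global existence and uniqueness for the ODE \R{pf1} in $H$. Everything else is a routine application of Gr\"onwall, so I would keep that part brief and simply cite the standard existence theory for the nonlinear semigroup $e^{hf}$.
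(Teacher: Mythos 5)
Your proposal is correct, but your primary route differs from the paper's. You argue via the integral form of the ODE plus Gr\"onwall's inequality, obtaining $\|v(t)-w(t)\|\le e^{L_f t}\|v_0-w_0\|$ directly from the Lipschitz bound on $f$. The paper instead works with the upper-right Dini derivative of $\|v(t)-w(t)\|$, establishes the differential inequality $D_t^+\|v-w\|\le M[f]\,\|v-w\|$ in terms of the \emph{logarithmic} Lipschitz constant $M[f]$ defined just before the lemma, solves that to get $e^{hM[f]}$, and only at the last step uses $M[f]\le L_f$. The two arguments land on the same bound here, but they are not interchangeable in spirit: the logarithmic-constant route can in principle be sharper, since $M[f]$ may be strictly smaller than $L_f$ (even negative for dissipative $f$), whereas Gr\"onwall applied to the integral equation can only ever produce $e^{hL_f}$. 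Your Gr\"onwall argument is the more elementary and self-contained of the two, and it sidesteps the mild technicalities of Dini derivatives of non-smooth functions $t\mapsto\|v(t)-w(t)\|$; the paper's argument is structured to mirror the stochastic analogue in its Lemma 2, where the same $M_2[\cdot]$ machinery is reused. You correctly anticipated this in your closing remarks --- your ``alternative route'' via $\tfrac{d}{dt}\|v-w\|^2$ and $M[f]\le L_f$ is essentially what the authors do. Your point about well-posedness of the trajectories under the global Lipschitz condition is sound and is left implicit in the paper as well.
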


\begin{proof}
Let $v$ and $w$ be two distinct solutions to \R{pf1}. Let $D_t^+$ denote the upper-right Dini derivative. 
Then, due to the assumptions on $f$ and its derivatives, we have
\bbbb
D_t^+\|v-w\| & = & \limsup_{h\to 0^+}\frac{\|v(t+h)-w(t+h)\|-\|v(t)-w(t)\|}{h}\nnn\\
& \le & \lim_{h\to 0^+} \frac{\|(I+hf)(v(t)-w(t))\|-\|v(t)-w(t)\|}{h}\nnn\\
& \le & \lim_{h\to 0^+}\frac{L[I+hf]\|v(t)-w(t)\|-\|v(t)-w(t)\|}{h}\nnn\\
& \le & M[f]\|v(t)-w(t)\|.\label{pf2}
\eeee
Solving the above inequality yields
$$
\|v(t)-w(t)\| \le e^{hM[f]}\|v_0-w_0\|.
$$
By the fact that $f$ is Lipschitz continuous in $H,$ we have
$$
M[f] \le L_f.
$$
This yields the desired result. \qed
\end{proof}

For the following lemma, we mirror the approach employed in Lemma 1, but we need to consider slightly modified Lipschitz constants. 
To that end, we define the {\em lub stochastic Lipschitz constant} and {\em lub logarithmic stochastic Lipschitz constant} of a function 
$G\,:\, H\to L_2^0(H)$ by
$$\mathbb{E}\l[L_2[G]\r]\mathrel{\mathop:}= \sup_{u\neq v}\frac{\mathbb{E}\|G(u)-G(v)\|_{L_2^0(H)}^2}{\|u-v\|^2}$$
and
$$M_2[G]\mathrel{\mathop:}= \lim_{h\to 0}\frac{L_2[I+hG]-1}{h},$$
respectively. 

\begin{lemma}
Let $v(t) = e^{\Delta W(h)g}(v_0)$ be the solution \R{ppf1}. It then follows that
$$\mathbb{E}\l[L_2[e^{\Delta W(h)g}]\r] \le e^{hL_g^2}.$$
\end{lemma}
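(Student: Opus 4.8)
The plan is to mirror the deterministic argument of Lemma 1, but working throughout with the mean-square norm and Itô calculus instead of a pathwise Dini-derivative estimate. Let $v(t) = e^{\Delta W(h)g}(v_0)$ and $w(t) = e^{\Delta W(h)g}(w_0)$ be two solutions of \R{ppf1} with distinct initial data, and set $\rho(t) \mathrel{\mathop:}= v(t) - w(t)$. Since $v$ and $w$ solve the same SDE driven by the same Wiener process, $\rho$ satisfies $d\rho = \l(g(v) - g(w)\r)dW$. I would apply Itô's formula to the functional $\|\rho(t)\|^2 = \langle \rho(t),\rho(t)\rangle$, which yields, after taking expectations and using that the stochastic integral is a martingale,
$$
\mathbb{E}\|\rho(t)\|^2 = \|\rho(0)\|^2 + \int_0^t \mathbb{E}\|g(v(s)) - g(w(s))\|_{L_2^0(H)}^2 \, ds.
$$

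Next I would control the integrand. By the definition of the lub stochastic Lipschitz constant $\mathbb{E}[L_2[G]]$, together with the infinitesimal characterization $M_2[G]$ exactly as in the passage from $L[I+hf]$ to $M[f]$ in Lemma 1, one obtains $\mathbb{E}\|g(v(s)) - g(w(s))\|_{L_2^0(H)}^2 \le M_2[g]\,\mathbb{E}\|\rho(s)\|^2$ along the flow; here $M_2[g]$ plays the role that $M[f]$ played in the drift case, and the Lipschitz bound of Assumption 3 gives $M_2[g] \le L_g^2$ since $\|g(u)-g(v)\|_{L_2^0(H)}^2 \le L_g^2\|u-v\|^2$. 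Substituting, one arrives at the integral inequality
$$
\mathbb{E}\|\rho(t)\|^2 \le \|\rho(0)\|^2 + L_g^2 \int_0^t \mathbb{E}\|\rho(s)\|^2\,ds,
$$
and Gr\"onwall's inequality gives $\mathbb{E}\|\rho(t)\|^2 \le e^{L_g^2 t}\|\rho(0)\|^2$. Evaluating at $t = h$ and dividing by $\|v_0 - w_0\|^2$, then taking the supremum over distinct $v_0 \neq w_0$, produces $\mathbb{E}[L_2[e^{\Delta W(h)g}]] \le e^{hL_g^2}$, as claimed.

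The main obstacle I anticipate is making the step $\mathbb{E}\|g(v(s)) - g(w(s))\|_{L_2^0(H)}^2 \le M_2[g]\,\mathbb{E}\|\rho(s)\|^2$ genuinely rigorous: the quantity $M_2[g]$ is defined via $L_2[I+hG]$, so one must argue, paralleling the three-line chain in the proof of Lemma 1, that the Itô increment $\|\rho(s+k)\|^2$ is controlled to leading order in $k$ by $\|(I + k g)\rho(s)\|$-type expressions, i.e. that the diffusion generates the same one-sided logarithmic bound infinitesimally. A cleaner alternative — which I would fall back on if the $M_2$ route proves awkward — is to bound the integrand directly by the Lipschitz estimate of Assumption 3, $\mathbb{E}\|g(v(s))-g(w(s))\|_{L_2^0(H)}^2 \le L_g^2 \mathbb{E}\|\rho(s)\|^2$, which reaches the same Gr\"onwall inequality without ever invoking $M_2[g]$; the $M_2$ formalism is then only cosmetic, included to parallel Lemma 1. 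Either way, one should also note at the outset that well-posedness of \R{ppf1} in the mild/strong sense and the finiteness $\mathbb{E}\|v(t)\|^2 < \infty$ needed to justify the martingale property of the stochastic integral follow from Assumptions 1--4 and the regularity framework already established, so the Itô-formula computation is legitimate.
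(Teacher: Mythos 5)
Your proposal is correct, and it reaches the paper's differential inequality $D_t^+\,\mathbb{E}\|v-w\|^2 \le L_g^2\,\mathbb{E}\|v-w\|^2$ by a cleaner mechanism. The paper derives that inequality via the Dini derivative of $\mathbb{E}\|v-w\|^2$ and a formal first-order expansion $(I+hg)(v-w)$, invoking the lub logarithmic stochastic Lipschitz constant $M_2[g]$ and deferring the justification of the discarded remainder terms to the proofs of Lemmas 4 and 5; it then bounds $M_2[g]\le L_g^2$ and integrates. You instead apply It\^{o}'s formula to $\|\rho(t)\|^2$ with $\rho=v-w$, so that the martingale term drops under expectation and the It\^{o} isometry gives the exact identity $\mathbb{E}\|\rho(t)\|^2=\|\rho(0)\|^2+\int_0^t\mathbb{E}\|g(v)-g(w)\|_{L_2^0(H)}^2\,ds$; the Lipschitz bound of Assumption 3 and Gr\"onwall then finish the argument. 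This buys you a fully rigorous derivation with no remainder terms to control and no need for the $M_2$ formalism at all (your own fallback observation is the right one: $M_2[g]$ is cosmetic here, kept only to parallel Lemma 1). The one caveat you correctly flag implicitly is the notational mismatch in the paper's definition of $\mathbb{E}[L_2[\cdot]]$, which uses the $L_2^0(H)$-norm in the numerator even though $e^{\Delta W(h)g}$ is $H$-valued; your reading, dividing $\mathbb{E}\|\rho(h)\|^2$ in the $H$-norm by $\|v_0-w_0\|^2$, is the intended one.
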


\begin{proof}
We proceed in a fashion similar to that of the previous proof. Let $v$ and $w$ be two distinct solution to 
\R{ppf1} and let $D_t^+$ denote the upper-right Dini derivative. Hence, we have 
\bbbb
D_t^+ \mathbb{E}\|v-w\|^2 & = & \limsup_{h\to 0^+}\frac{\mathbb{E}\l[\|v(t+h)-w(t+h)\|^2 - \|v(t)-w(t)\|^2\r]}{h}\\
& \le & \lim_{h\to 0^+}\frac{\mathbb{E}\l[\|(I+hg)(v(t)-w(t))\|^2_{L_2^0(H)} - \|v(t)-w(t)\|^2\r]}{h}\\
& \le & M_2[g]\mathbb{E}\|v(t)-w(t)\|^2.
\eeee
Note that deriving the second inequality follows from the fact that the remainder terms from the expansion are bounded. The details of this claim can be found in the proofs of Lemmas 4 and 5.
Due to the expectation, the above inequality is deterministic and its solution is given by
$$\mathbb{E}\|v(t)-w(t)\|^2 \le e^{hM_2[g]}\mathbb{E}\|v_0-w_0\|^2.$$
Once again, since $g$ is Lipschitz in $H,$ we have
$$M_2[g] = \lim_{h\to 0}\frac{1}{h}\l[\sup_{u\neq v}\frac{\mathbb{E}\|(1+hg)(u-v)\|_{L_2^0(H)}^2}{\|u-v\|^2} - 1\r] \le L_g^2.$$
This yields the desired result. \qed
\end{proof}

\begin{lemma}
Consider \R{s1} and \R{s2}. Then we have
$$\mathbb{E}\|S^n(u)-S^n(v)\|^2 \le C\mathbb{E}\|u-v\|^2,$$
and in particular,
$$\mathbb{E}\l[L_2[S]\r] \le e^{hC}.$$
\end{lemma}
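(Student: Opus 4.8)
The plan is to build the stability of the full splitting operator $S = e^{hA}e^{hf}e^{\Delta W(h)g}$ from the three factor-wise estimates already established. The key observation is that $e^{hA}$ is a linear contraction (Proposition with $\|e^{tA}\|\le 1$ from Assumption 1), $e^{hf}$ satisfies $L[e^{hf}]\le e^{hL_f}$ (Lemma 1), and $e^{\Delta W(h)g}$ satisfies the mean-square estimate $\mathbb{E}[L_2[e^{\Delta W(h)g}]]\le e^{hL_g^2}$ (Lemma 2). So the first step is a single-step bound: for $u,v\in H$,
$$
\mathbb{E}\|S(u)-S(v)\|^2 = \mathbb{E}\l\|e^{hA}\l(e^{hf}e^{\Delta W(h)g}(u) - e^{hf}e^{\Delta W(h)g}(v)\r)\r\|^2 \le \mathbb{E}\l\|e^{hf}e^{\Delta W(h)g}(u)-e^{hf}e^{\Delta W(h)g}(v)\r\|^2,
$$
then apply the deterministic Lipschitz bound for $e^{hf}$ pathwise (it is deterministic, so it passes through the expectation) to pull out $e^{2hL_f}$, and finally apply Lemma 2 to the remaining $e^{\Delta W(h)g}$ term. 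This gives
$$
\mathbb{E}\|S(u)-S(v)\|^2 \le e^{2hL_f}\,e^{hL_g^2}\,\mathbb{E}\|u-v\|^2 =: e^{h\tilde C}\,\mathbb{E}\|u-v\|^2,
$$
which is exactly the claimed $\mathbb{E}[L_2[S]]\le e^{hC}$ with $C = 2L_f + L_g^2$.

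The second step is to iterate. Since $u_n = S^n(u_0)$ is built by repeated composition and the filtration is normal, one conditions successively: writing $\mathbb{E}\|S^n(u)-S^n(v)\|^2 = \mathbb{E}\l[\mathbb{E}\l[\|S(S^{n-1}u)-S(S^{n-1}v)\|^2 \,\big|\, \mathcal{F}(t_{n-1})\r]\r]$ and noting that the Wiener increment $\Delta W$ driving the $n$-th step is independent of $\mathcal{F}(t_{n-1})$ while $S^{n-1}u, S^{n-1}v$ are $\mathcal{F}(t_{n-1})$-measurable, the single-step estimate applies conditionally to give a factor $e^{h\tilde C}$ at each stage. Inducting downward yields $\mathbb{E}\|S^n(u)-S^n(v)\|^2 \le e^{nh\tilde C}\mathbb{E}\|u-v\|^2 = e^{T\tilde C}\mathbb{E}\|u-v\|^2$ since $nh\le Nh = 1 \le T$ (or more generally $nh \le T$), so the right-hand side is bounded by a constant $C = e^{T\tilde C}$ independent of $n$ and $h$. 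This is the first claimed inequality; the second, $\mathbb{E}[L_2[S]]\le e^{hC}$, is just the single-step bound restated in the notation of the lub stochastic Lipschitz constant.

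The main obstacle — and the only genuinely delicate point — is justifying the measurability/independence structure needed for the conditioning argument, i.e. confirming that $e^{\Delta W(h)g}$ acting on the $n$-th step genuinely depends only on the increment $W(t_n)-W(t_{n-1})$ and that the nonlinear semigroups $e^{hf}$ and $e^{\Delta W(h)g}$ map $\mathcal{F}(t_{n-1})$-measurable inputs to $\mathcal{F}(t_n)$-measurable outputs with the right adaptedness. This is standard for mild/strong solutions of the split SDEs under Assumptions 2–3, but it is what makes the telescoping of Lemma 2's estimate legitimate rather than merely formal. A secondary, more technical gap is the remark deferred in Lemma 2's proof — that the higher-order remainder terms in the Dini-derivative expansion for $e^{\Delta W(h)g}$ are bounded; the excerpt points forward to Lemmas 4 and 5 for this, so I would simply cite that once those are in place. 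Everything else is bookkeeping: tracking that the constants $L_f, L_g$ and the horizon $T$ are fixed, so the accumulated exponential factor is uniform in the discretization.
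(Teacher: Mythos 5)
Your proposal is correct and follows essentially the same route as the paper: drop the linear factor by the contraction $\|e^{hA}\|\le 1$, pull out $e^{2hL_f}$ via Lemma 1, apply Lemma 2 to the stochastic factor to obtain the one-step bound $e^{hC}$, and iterate to accumulate $e^{TC}$. Your explicit attention to the conditioning/independence structure needed to justify the iteration is a point the paper passes over with ``via iterations, it follows immediately,'' but it does not change the argument.
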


\begin{proof}
By Lemmas 1 and 2, we readily have the following estimates: 
\bbb
\mathbb{E}\|S(u)-S(v)\|^2 
& \le & \mathbb{E}\l[L[e^{hf}]^2\|e^{\Delta W(h)g}(u-v)\|_{L_2^0(H)}^2\r]\nnn\\
& \le & e^{2hL_f}\mathbb{E}\l[L_2[e^{\Delta W(h)g}]\|(u-v)\|^2\r]\nnn\\
& \le & e^{hC}\mathbb{E}\|u-v\|^2.\label{lipbound}
\eee
Via iterations, it follows immediately that
$$
\mathbb{E}\|S^n(u)-S^n(v)\|^2 \le \prod_{j=0}^n e^{hC}\mathbb{E}\|u-v\|^2 \le e^{TC}\mathbb{E}\|u-v\|^2,
$$
which gives the desired result. \qed
\end{proof}

\section{Approximation Consistency}
\label{sec:4}
Similar to discussions in \cite{Hansen2012} and \cite{Jahnke2000},  
we define
$$\phi(t)\mathrel{\mathop:}= \frac{1}{t}\int_0^t e^{(t-s)A}f(T(u(s)))\,ds,\quad \psi(t)\mathrel{\mathop:} = \frac{1}{t}\int_0^t e^{(t-s)A}g(T(u(s)))\,dW(s),$$
where $T(u)$ is the solution operator for \R{e1}-\R{e2}, and
\bb{tt}
T(u) = e^{hA}u + \int_0^h e^{(h-s)A}f(u)\,ds + \int_0^h e^{(h-s)A}g(u)\,dW(s),\quad \mathbb{P}-a.s.
\ee
Note that the operators are well-defined and map $H$ into itself for $u\in\mbox{Dom}(A).$ 

\begin{lemma}
Assume that $u\in\mbox{Dom}(A).$ Then 
$$\mathbb{E}\|(T-S)(u)\|^2 \le Ch^{2+\beta},$$
where $\beta\in(0,1]$ is defined in \R{normcond}.
\end{lemma}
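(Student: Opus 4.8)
The plan is to compare the one-step true solution operator $T(u)$ in \R{tt} with the splitting operator $S(u) = e^{hA}e^{hf}e^{\Delta W(h)g}(u)$ from \R{s2} by expanding each in powers of $h$ (and, for the stochastic part, in powers of $\Delta W(h)$) and then matching terms. First I would write out the Taylor/It\^o expansions of the three nonlinear semigroup factors. Since $v(t)=e^{tf}(v_0)$ solves $dv=f(v)\,dt$, we have $e^{hf}(u) = u + hf(u) + \tfrac12 h^2 Df[u]f(u) + R_f$ with $\|R_f\|\le Ch^3(1+\|u\|)$, using the uniform bounds on $Df,D^2f$ from Assumption 2. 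Similarly, $z(t)=e^{\Delta W(h)g}(z_0)$ solves $dz=g(z)\,dW$, so by It\^o's formula $e^{\Delta W(h)g}(u) = u + \int_0^h g(u)\,dW(s) + \int_0^h Dg[z(s)]g(z(s))\,dW\cdot dW$-type terms; the leading correction is $g(u)\Delta W(h)$ plus a term of size $\mathcal O(h)$ coming from the It\^o correction $\tfrac12 D g[u](g(u),g(u))$ integrated against $d\langle W\rangle$, plus a remainder. Applying $e^{hA}$ and using Proposition 2 to control $(-A)^\alpha e^{hA}$, I would collect the expansion of $S(u)$ up to and including all terms that survive in mean square at order $h^{1+\beta/2}$.

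Next I would expand $T(u)$ from \R{tt}: $T(u) = e^{hA}u + \int_0^h e^{(h-s)A}f(u)\,ds + \int_0^h e^{(h-s)A}g(u)\,dW(s)$. The deterministic integral is $hf(u) + \mathcal O(h^2)$ after replacing $e^{(h-s)A}$ by $I$ plus a correction $(e^{(h-s)A}-I)$ of size $\mathcal O(h)$ in the appropriate operator norm (on $\mbox{Dom}(A)$, using Assumption 5 so that $f(u)\in\mbox{Dom}(A)$). The stochastic integral is $g(u)\Delta W(h) + \int_0^h (e^{(h-s)A}-I)g(u)\,dW(s)$; by It\^o's isometry the second piece has mean square $\int_0^h \|(e^{(h-s)A}-I)g(u)\|_{L_2^0(H)}^2\,ds$, and here is where Assumption 4 enters: writing $(e^{(h-s)A}-I)g(u) = (e^{(h-s)A}-I)(-A)^{(1-\beta)/2}(-A)^{(\beta-1)/2}g(u)$ and using $\|(e^{(h-s)A}-I)(-A)^{(1-\beta)/2}\|\le C(h-s)^{(1-\beta)/2}$ together with \R{normcond} and Assumption 6, this contributes $\mathcal O(h^{1+\beta})$ in mean square, i.e.\ $\mathcal O(h^{(1+\beta)/2})$ in $L^2$-norm — but it also cancels against the corresponding term in the $S$ expansion, so what remains is genuinely higher order. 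After subtracting, the $\mathcal O(1)$, $\mathcal O(h)$ deterministic, and $\mathcal O(\Delta W)$ stochastic terms cancel between $T$ and $S$; the first-order It\^o corrections $\tfrac12$-type terms of the $g$-flow and the commutator terms from ordering $e^{hA},e^{hf},e^{\Delta W(h)g}$ must be shown to be either $\mathcal O(h^2)$ in $L^2$ or to carry a factor controlled by \R{normcond}.

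The key estimate, then, is that the leading non-cancelling discrepancy is a stochastic integral whose integrand involves $(-A)$ acting on $g$ (from the commutator $[e^{hA}, e^{\Delta W(h)g}]$-type term and from $(e^{(h-s)A}-I)g(u)$), and by It\^o's isometry its mean square equals a time integral of an $L_2^0(H)$-norm that is finite precisely because of the regularity condition \R{normcond}; this produces the exponent $2+\beta$ rather than merely $2$. I expect the main obstacle to be the careful bookkeeping of the stochastic remainder terms — ensuring that every term in the It\^o expansion of $e^{\Delta W(h)g}(u)$ beyond $u + g(u)\Delta W(h)$ is either matched by a term in $T(u)$ or bounded in mean square by $Ch^{2+\beta}(1+\|u\|^2)$, using Assumptions 2, 3, and 6 for the nonlinear derivative terms and Proposition 2(i) for the smoothing of $e^{hA}$ against the loss of $(-A)^{(1-\beta)/2}$ regularity. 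Once all lower-order terms are verified to cancel and all remainders are collected, taking expectations, applying It\^o's isometry to the stochastic pieces, and invoking Theorem 1 (with $\alpha$ chosen appropriately, or simply the $\alpha=0$ moment bound together with $u\in\mbox{Dom}(A)$) to bound $\mathbb E\|u\|^2$-type quantities yields $\mathbb E\|(T-S)(u)\|^2 \le Ch^{2+\beta}$.
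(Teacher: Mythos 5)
Your overall strategy is the same as the paper's: expand $S=e^{hA}e^{hf}e^{\Delta W(h)g}$ by a stochastic Taylor expansion, expand $T$ from the mild-solution formula, cancel the matched terms, apply It\^o's isometry together with Assumption 4 to the surviving stochastic discrepancy (this is exactly where the exponent $\beta$ is produced), and control the remainders with the uniform bounds on $Df,Dg,D^2g$, Assumption 6, and the smoothing of $e^{tA}$. The paper packages the surviving leading term as $\int_0^h[z_i(s)-z_i(0)]$ with $z_1(s)=e^{(h-s)A}f(e^{sA}u)$ and $z_2(s)=e^{(h-s)A}g(e^{sA}u)$, differentiates in $s$, and uses $u\in\mbox{Dom}(A)$; your ``commutator'' discussion is the same computation in different clothing.

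There is, however, one genuine gap. You expand $T$ with the nonlinearities frozen at $u$, reading \R{tt} literally. But $T$ is the exact one-step solution operator, so the integrands are really $f(u(s))$ and $g(u(s))$; the paper substitutes $u(s)=e^{sA}u+s\phi(s)+s\psi(s)$, linearizes around $e^{sA}u$, and collects the resulting terms into $R_T$ in \R{ttr}. Your expansion therefore omits the contributions $\int_0^h e^{(h-s)A}\bigl(f(u(s))-f(u)\bigr)\,ds$ and $\int_0^h e^{(h-s)A}\bigl(g(u(s))-g(u)\bigr)\,dW(s)$. These cancel against nothing in $S$, and the naive Lipschitz-plus-isometry estimate for the stochastic one gives only $C\int_0^h\mathbb{E}\|u(s)-u\|^2\,ds=O(h^2)$ in mean square, which is \emph{larger} than the target $h^{2+\beta}$; beating $h^2$ here is precisely the content of the paper's estimate \R{ttra}, which uses the uniform boundedness of $Df,Dg$ together with the $h^{\beta-1}$ smoothing factor extracted via Assumption 4. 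A secondary point: the It\^o flow $e^{\Delta W(h)g}$ carries no drift correction $\tfrac12 Dg[u](g(u),g(u))\,h$ --- one has exactly $e^{\Delta W(h)g}u-u=\int_0^h g(e^{\Delta W(s)g}u)\,dW(s)$, a martingale (this is the paper's $R_2$). If such an $O(h)$ deterministic term genuinely appeared in $S$ but not in $T$, its mean square $O(h^2)$ would again exceed $Ch^{2+\beta}$ and the lemma would fail; so it matters that this term is absent, not merely ``higher order,'' and your expansion should be corrected accordingly.
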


\begin{proof}
By appealing to the stochastic version of Taylor's theorem, we arrive at
\bbb
S(u) & = & 
e^{hA}e^{hf}e^{\Delta W(h)g}u\nnn\\
& = & 
e^{hA}e^{\Delta W(h)g}u + he^{hA}fe^{\Delta W(h)g}u + R_1(u)\nnn\\
& = & 
e^{hA}u + he^{hA}f\l(u + g(u)\Delta W(h) + R_2(u)\r) + e^{hA}g(u)\Delta W(h) + R_1(u) + R_2(u)\nnn\\
& = & e^{hA}u + he^{hA}f(u) + e^{hA}g(u)\Delta W(h) + R_1(u) + R_2(u) + R_3(u),\label{t1}
\eee
where
$$\Delta W(h)\mathrel{\mathop:}= \int_0^h dW(s),$$
\bb{r1}
R_1(u) \mathrel{\mathop:}= 
h^2\int_0^1 (1-s)Df[e^{shf}e^{\Delta W(h)g}u]fe^{shf}e^{\Delta W(h)g}u\,ds,
\ee
\bbb
R_2(u) &\mathrel{\mathop:}=& 
\int_0^h\int_0^1 (1-y)e^{hA}D^2g[u+y(e^{\Delta W(s)g}u - u)](e^{\Delta W(s)g}u - u)^2\,dy\,dW(s)\nnn\\
&&~~~~~~~~~~ 
+ \int_0^h e^{hA}Dg[u](e^{\Delta W(s)g}u-u)\,dW(s),\label{r2}
\eee
and
\bb{r4}
R_3(u)\mathrel{\mathop:}= 
h\int_0^1e^{hA} Df[u + y(g(u)\Delta W(h) + R_2(u))](g(u)\Delta W(h) + R_2(u))\,dy.
\ee

Recall \R{tt}. We observe that
\bbb
T(u) & = & e^{hA}u + \int_0^h e^{(h-s)A}f(u)\,ds + \int_0^h e^{(h-s)A}g(u)\,dW(s)\nnn\\
& = & e^{hA}u + \int_0^h e^{(h-s)A}f(e^{sA}u + s\phi(s) + s\psi(s))\,ds \nnn\\
&&~~~~~~~~~~~~~~~+ \int_0^h e^{(h-s)A}g(e^{sA}u + s\phi(s) + s\psi(s))\,dW(s)\nnn\\
& = & e^{hA}u + \int_0^h e^{(h-s)A}f(e^{sA}u)\,ds + \int_0^h e^{(h-s)A}g(e^{sA}u)\,dW(s) + R_T(u),\label{tt1}
\eee
where
\bbb
R_T(u) &\mathrel{\mathop:}=& \int_0^h\int_0^1 e^{(h-s)A}Df[\xi(y;s)](s\phi(s)+s\psi(s))\,dy\,ds\nnn\\
&&~~~~~~~~~~~~~~~ + \int_0^h\int_0^1 e^{(h-s)A}Dg[\xi(y;s)](s\phi(s)+s\psi(s))\,dy\,dW(s),\label{ttr}
\eee
where $\xi(y;s)\mathrel{\mathop:}= e^{sA}u+y(s\phi(s)+s\psi(s)) \in H.$ Combining the above yields
\bb{sub}
(T-S)(u) = \int_0^h [z_1(s)-z_1(0)]\,ds + \int_0^h [z_2(s)-z_2(0)]\,dW(s) + R(u),
\ee
where
$$z_1(s)\mathrel{\mathop:}= e^{(h-s)A}f(e^{sA}u),~~~ z_2(s)\mathrel{\mathop:}= e^{(h-s)A}g(e^{sA}u)$$
and
$$R(u)\mathrel{\mathop:}= (R_T-R_1-R_2-R_3)(u).$$
Upon further expansion of \R{sub}, we obtain
\bb{sub2}
(T-S)(u) = h\int_0^h z_1'(\xi_1)\,ds + h\int_0^h z_2'(\xi_2)\,dW(s) + R(u),
\ee
for some $\xi_1,\xi_2\in [0,h].$

Utilizing the above equalities and Ito's isometry, we acquire that
\bbb
\mathbb{E}\|(T-S)(u)\|^2 & \le & 3\mathbb{E}\l\|h\int_0^h z'(\xi_1)\,ds\r\|^2 + 
3\mathbb{E}\l\|h\int_0^h z'(\xi_2)\,dW(s)\r\|^2 + 3\mathbb{E}\l\|R(u)\r\|^2\nnn\\
& = & 3h^2\int_0^h \mathbb{E}\|z_1'(\xi_1)\|^2\,ds\nnn\\
&&~~~~~~~~~~~~~~~ + 3h^2\int_0^h\mathbb{E}\|Q^{1/2}z_2'(\xi_2)\|_{L_2(H)}^2\,ds + 3\mathbb{E}\|R(u)\|^2\label{fbnd}.
\eee
It now remains to estimate each of the integrals in \R{fbnd}. 
To this end, we observe that 
$$z_1'(\xi_1) = -Ae^{(h-\xi_1)A}f(e^{\xi_1A}u) + e^{(h-\xi_1)A}Df[e^{\xi_1A}u]e^{\xi_1A}Au.$$
When $u\in\mbox{Dom}(A),$ it follows immediately that $z_1'\in H$ and thus
$$\mathbb{E}\|z_1'(\xi_1)\|^2 \le C.$$
By the same token, we have
$$z_2'(\xi_2) = -Ae^{(h-\xi_2)A}g(e^{\xi_2A}u) + e^{(h-\xi_2)A}Dg[e^{\xi_2A}u]e^{\xi_2A}Au \in H.$$
Thus,
\bbb
&&\mathbb{E}\l\|Q^{1/2}\l[-Ae^{(h-\xi_2)A}g(e^{\xi_2A}u) + e^{(h-\xi_2)A}Dg[e^{\xi_2A}u]e^{\xi_2A}Au\r]\r\|_{L_2(H)}^2\nnn\\
&& ~~~~~~~~~~~~~~~~~~~~~~~~~~~ \le 2\mathbb{E}\l\|Q^{1/2}(-A)e^{(h-\xi_2)A}g(e^{\xi_2A}u)\r\|_{L_2(H)}^2\nnn\\
&&~~~~~~~~~~~~~~~~~~~~~~~~~~~~~~~~~~~~~~~~ + 2 \mathbb{E}\l\|Q^{1/2}e^{(h-\xi_2)A}Dg[e^{\xi_2A}u]e^{\xi_2A}Au\r\|_{L_2(H)}^2.\nnn
\eee
Considering the first quantity in the above inequality, we find that
\bbb
&&\l\|Q^{1/2}(-A)e^{(h-\xi_2)A}g(e^{\xi_2A}u)\r\|_{L_2(H)}^2\nnn\\ 
&&~~~~~ \le\l\|Q^{1/2}(-A)^{(\beta -1)/2}\r\|_{L_2(H)}^2\l\|(-A)^{-\beta/2}e^{(h-\xi_2)A}\r\|_{L(H)}^2\l\|g(e^{\xi_2A}u)\r\|_{L_2(H)}^2\nnn\\
&&~~~~~ \le Ch^\beta (1+\|u\|)^2,\nnn
\eee
and thus by Theorem 1 and Lemma 3, we have
\bb{bnd11}
\mathbb{E}\l\|Q^{1/2}(-A)e^{(h-\xi_2)A}g(e^{\xi_2A}u)\r\|_{L_2(H)}^2 \le Ch^\beta.
\ee
Considering the
remaining quantity yields
\bbb
&&\l\|Q^{1/2}Ae^{(h-\xi_2)A}Dg[e^{\xi_2A}u]e^{\xi_2A}u\r\|_{L_2(H)}^2\nnn\\
&&~~~~~ \le \l\|Q^{1/2}(-A)^{(\beta-1)/2}\r\|_{L_2(H)}^2\l\|(-A)^{-\beta/2}e^{(h-\xi_2)A}\r\|_{L(H)}^2\l\|Dg[e^{\xi_2A}]e^{\xi_2A}u\r\|_{L_2(H)}^2\nnn\\
&&~~~~~ \le Ch^{\beta}\l\|Dg[e^{\xi_2A}u]e^{\xi_2A}u\r\|_{L_2(H)}^2
~ \le ~ Ch^\beta.\label{bnd22}
\eee
Combining \R{bnd11} and \R{bnd22} with \R{fbnd} yields
\bb{bnd3}
\mathbb{E}\|(T-S)(u)\|^2 \le Ch^{2+\beta} + \mathbb{E}\|R(u)\|^2.
\ee

The desired result follows by applying the bound in Lemma 5 to \R{bnd3}. \qed

\end{proof}


\begin{lemma}
Assume that $u\in\mbox{Dom}(A).$ Then 
$$\mathbb{E}\|R(u)\|^2 \le Ch^{2+\beta},$$
where $\beta\in(0,1]$ is defined in \R{normcond}.
\end{lemma}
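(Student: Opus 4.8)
The plan is to bound the four pieces of $R(u) = (R_T - R_1 - R_2 - R_3)(u)$ separately, using the inequality $\mathbb{E}\|R(u)\|^2 \le 4\big(\mathbb{E}\|R_T(u)\|^2 + \mathbb{E}\|R_1(u)\|^2 + \mathbb{E}\|R_2(u)\|^2 + \mathbb{E}\|R_3(u)\|^2\big)$ and showing each term is $O(h^{2+\beta})$. The mechanism throughout will be: (i) uniform boundedness of $Df$, $D^2g$, etc. (Assumptions 2, 3, 6); (ii) the smoothing bounds of Proposition 3, especially $\|(-A)^{-\beta/2}e^{tA}\|_{L(H)} \le C h^{\beta/2}$ on $[0,h]$ together with $\|(-A)^{(\beta-1)/2}Q^{1/2}\|_{L_2(H)} \le C$ from \R{normcond}; (iii) Ito's isometry to convert stochastic integrals over $[0,h]$ into $L_2^0(H)$-norm integrals, each contributing a factor $h$; (iv) the moment bounds $\mathbb{E}\|e^{\Delta W(s)g}u - u\|^2 \le Cs$ and $\mathbb{E}|\Delta W(h)|^2 = O(h)$ for the noise increments, plus $\mathbb{E}\|R_2(u)\|^2 = O(h^{1+\beta})$ or better established en route; and (v) $u \in \mathrm{Dom}(A)$, so $Au$ is a legitimate element of $H$ and quantities like $\|\phi(s)\|$, $\|\psi(s)\|$ are controlled via Theorem 1.

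First I would handle $R_1(u)$ from \R{r1}: it carries an explicit $h^2$ prefactor, and the integrand $Df[\cdots]\,f(e^{shf}e^{\Delta W(h)g}u)$ is bounded in $H$ by the uniform bound on $Df$ and the linear growth of $f$ (Assumption 2), with $\|e^{shf}e^{\Delta W(h)g}u\|$ controlled in mean square by Lemmas 1--3; hence $\mathbb{E}\|R_1(u)\|^2 \le C h^4 \le C h^{2+\beta}$ since $\beta \le 1$. Next, $R_3(u)$ from \R{r4} has an $h$ prefactor and integrand $e^{hA}Df[\cdots](g(u)\Delta W(h) + R_2(u))$; using $\|e^{hA}\| \le 1$, the uniform bound on $Df$, and $\mathbb{E}\|g(u)\Delta W(h)\|^2 \le C h$ (Ito isometry plus growth of $g$) together with a preliminary $\mathbb{E}\|R_2(u)\|^2 \le C h^{1+\beta}$, one gets $\mathbb{E}\|R_3(u)\|^2 \le C h^2 \cdot h = C h^3 \le C h^{2+\beta}$. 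For $R_2(u)$ from \R{r2}, I apply Ito's isometry to each of the two stochastic integrals; the dominant term is $\int_0^h e^{hA}Dg[u](e^{\Delta W(s)g}u - u)\,dW(s)$, whose mean-square norm is $\int_0^h \mathbb{E}\|Q^{1/2}e^{hA}Dg[u](e^{\Delta W(s)g}u-u)\|_{L_2(H)}^2\,ds$; here I insert $(-A)^{(\beta-1)/2}$ and its inverse, use \R{normcond}, the Assumption 6 bound $\|(-A)^{(\beta-1)/2}Dg[u](\cdot)\|_{L_2^0(H)} \le C\|\cdot\|$, and $\|(-A)^{(1-\beta)/2}e^{hA}\|_{L(H)} \le C h^{(\beta-1)/2}$ wait — more carefully, I would route the smoothing onto a different factor, ultimately extracting $h^{1+\beta}$ from the $s$-integral of $\mathbb{E}\|e^{\Delta W(s)g}u - u\|^2 \le Cs$ against a bounded kernel, which suffices; the $D^2g$ term is higher order. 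This actually gives $\mathbb{E}\|R_2(u)\|^2 \le C h^{2}$ at worst, certainly $\le C h^{2+\beta}$ since $\beta \le 1$ — I should double-check whether $h^{2}$ or $h^{2+\beta}$ is needed and place the $(-A)^{(\beta-1)/2}$ factor so that \R{normcond} rather than brute force supplies the extra $h^\beta$.

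The subtlest term is $R_T(u)$ from \R{ttr}, because $\phi(s)$ and $\psi(s)$ themselves contain integrals over $[0,s]$ of $f$ and $g$ composed with $T(u(\cdot))$, and one must see that $s\phi(s) + s\psi(s) = O(\sqrt{s})$ in mean square — indeed $\mathbb{E}\|s\phi(s)\|^2 \le C s^2$ trivially and $\mathbb{E}\|s\psi(s)\|^2 \le C s$ by Ito isometry on the inner integral, using the growth of $f,g$ and the bound $\mathbb{E}\|T(u(r))\|^2 \le C$ from \R{mildsolnorm} and Theorem 1. The deterministic part of $R_T$ then yields $\mathbb{E}\|\int_0^h\int_0^1 e^{(h-s)A}Df[\xi](s\phi(s)+s\psi(s))\,dy\,ds\|^2 \le C(\int_0^h (\mathbb{E}\|s\phi(s)+s\psi(s)\|^2)^{1/2}ds)^2 \le C h^{3}$; the stochastic part gives, via Ito, $\int_0^h \mathbb{E}\|Q^{1/2}e^{(h-s)A}Dg[\xi](s\phi(s)+s\psi(s))\|_{L_2(H)}^2\,ds$, into which I insert $(-A)^{(\beta-1)/2}$, apply \R{normcond} and the Assumption 6 bound on $(-A)^{(\beta-1)/2}Dg$, leaving $\int_0^h \mathbb{E}\|s\phi(s)+s\psi(s)\|^2\,ds \le C\int_0^h (s^2 + s)\,ds \le C h^2$, which is $\le C h^{2+\beta}$. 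The main obstacle, and the step I would spend the most care on, is precisely this handling of $\psi(s)$ inside $R_T$: one must justify interchanging expectations and integrals, verify that the $L_2^0(H)$-valued predictable integrand $g(T(u(s)))$ has the requisite integrability so Ito's isometry applies on the inner integral, and confirm that the $(-A)^{(\beta-1)/2}$ trick can be applied uniformly in $s$ — after which every contribution is at worst $O(h^2)$ or $O(h^{2+\beta})$, and since $\beta \in (0,1]$ all are dominated by $C h^{2+\beta}$, completing the proof. \qed
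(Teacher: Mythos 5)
Your decomposition and toolkit are the same as the paper's: split $R = R_T - R_1 - R_2 - R_3$, bound each term using the uniform boundedness of $Df$, $Dg$, $D^2g$, It\^{o}'s isometry, the smoothing estimate $\|(-A)^{(1-\beta)/2}e^{tA}\|_{L(H)}\le Ct^{(\beta-1)/2}$ combined with \R{normcond} and Assumption 6, and the increment bound $\mathbb{E}\|e^{\Delta W(s)g}u-u\|^2\le Cs$ of Lemma 6. Your treatment of $R_1$ and $R_3$ matches the paper's \R{r1a} and is fine, since $h^4$ and $h^3$ really are $\le h^{2+\beta}$ for $\beta\le 1$.

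There is, however, a genuine gap at exactly the two terms where the lemma has content. You twice conclude with ``$\le Ch^{2}$, which is $\le Ch^{2+\beta}$ since $\beta\le 1$'' --- once for $R_2$ and once for the stochastic part of $R_T$. This inequality runs in the wrong direction: for $0<h<1$ and $\beta>0$ one has $h^{2+\beta}=h^{2}\cdot h^{\beta}<h^{2}$, so $O(h^{2})$ is a \emph{weaker} statement than the claimed $O(h^{2+\beta})$, and establishing the former does not yield the latter. (Compare with $R_1$ and $R_3$, where the exponents $4$ and $3$ exceed $2+\beta$, so the comparison is legitimate.) The extra factor $h^{\beta}$ cannot be obtained by exponent comparison; it must be produced by the analysis, which is what the paper's proof does: for $R_2$ it extracts $Ch^{\beta-1}$ from $\l\|(-A)^{(1-\beta)/2}e^{hA}\r\|^2_{L(H)}$ together with Assumption 6, and then multiplies by the time integral of the Lemma~6 bound to arrive at \R{r2a}; for $R_T$ it likewise keeps the $Ch^{\beta-1}$ prefactor explicit and pairs it against $\int_0^h\mathbb{E}\|s\phi(s)+s\psi(s)\|^2\,ds$ in \R{ttra}. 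You gesture at this mechanism (``place the $(-A)^{(\beta-1)/2}$ factor so that \R{normcond} rather than brute force supplies the extra $h^\beta$'') but then abandon it in favor of the invalid $h^2\le h^{2+\beta}$ step, so as written the proof establishes at best $\mathbb{E}\|R(u)\|^2\le Ch^{2}$ (and, with your own estimate $\mathbb{E}\|s\psi(s)\|^2\le Cs$ fed through the $h^{\beta-1}$ factor, possibly only $Ch^{1+\beta}$), not the stated $Ch^{2+\beta}$. To close the argument you must carry the $\beta$-dependent smoothing through the $R_2$ and $R_T$ estimates and keep careful track of which side of $h^2$ each resulting power lands on.
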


\begin{proof}
We now demonstrate that all terms in $R(u)$ have the expected error bounds. Recalling $R(u),$ we have
\bbb
\mathbb{E}\|R(u)\|^2 &\le & 4\mathbb{E}\|R_T(u)\|^2 + 4\mathbb{E}\|R_1(u)+ 4\mathbb{E}\|R_2(u)\|^2 + 4\mathbb{E}\|R_3(u)\|^2\label{rr1}.
\eee
Let us estimate each of the terms in \R{rr1} individually. First, we observe that
\bbb
\mathbb{E}\|R_T(u)\|^2 & \le & \mathbb{E}\l\|\int_0^h\int_0^1 e^{(h-s)A}Df[\xi(y;s)](s\phi(s)+s\psi(s))\,dy\,ds\r\|^2\nnn\\
&&~~~~~~~~~~ + \mathbb{E}\l\|\int_0^h\int_0^1 e^{(h-y)A}Dg[\xi(y;s](s\phi(s)+s\psi(s))\,dy\,dW(s)\r\|_{L_2^0(H)}^2\nnn\\
& \le & \int_0^h\int_0^1 \mathbb{E}\l\|Df[\xi(y;s)](s\phi(s) + s\psi(s))\r\|^2\,dy\,ds, \nnn\\
&&~~~~~~~~~~+ Ch^{\beta-1}\int_0^h\int_0^1 \mathbb{E}\l\|Dg[\xi(y;s)](s\phi(s)+s\psi(s))\r\|_{L_2(H)}^2\,dy\,ds.\nnn
\eee
and by recalling that $Df$ and $Dg$ are uniformly bounded in $H,$ we obtain
\bbb
\mathbb{E}\|R_T(u)\|^2 & \le & C\int_0^h\int_0^1 \mathbb{E}\l\|s\phi(s)+s\psi(s)\r\|^2\,dy\,ds\nnn\\
&&~~~~~~~~~~ + Ch^{\beta-1}\int_0^h \int_0^1\mathbb{E}\l\|s\phi(s)+s\psi(s)\r\|_{L(H)}^2\,dy\,ds\nnn\\
& \le &  Ch^{2+\beta}.\label{ttra}
\eee

Recall \R{r1} and \R{r4}. Due to the fact that $Df$ is uniformly bounded, it is straightforward to show that
\bb{r1a}
\mathbb{E}\|R_1(u)\|^2 \le Ch^4\quad\mbox{and}\quad
\mathbb{E}\|R_3(u)\|^2 \le Ch^3.
\ee
Finally, according to \R{r2}, by invoking Assumption 6 we have
\bbb
&&\mathbb{E}\|R_2(u)\|^2\nnn\\
&&~~~~~ \le 2\int_0^h\int_0^s \mathbb{E}\l\|e^{hA}D^2g[u+y(e^{\Delta W(s)g}u - u)](e^{\Delta W(s)g}u - u)^2\r\|_{L_2^0(H)}^2\,dy\,ds\nnn\\
&&~~~~~~~~~~~~~~ +2\int_0^h \mathbb{E}\l\|e^{hA}Dg[u](e^{\Delta W(s)g}u-u)\r\|_{L_2^0(H)}^2\,ds.\nnn\\
&&~~~~~ \le Ch^{\beta-1}\int_0^h\l[\int_0^s \mathbb{E}\l\|e^{\Delta W(s)g}u - u\r\|^2\,dy + \mathbb{E}\l\|e^{\Delta W(s)g}u-u\r\|^2\r]\,ds.\nnn
\eee
By employing Lemma 6 in the above inequality, we obtain
\bb{r2a}
\mathbb{E}\|R_2(u)\|^2 \le Ch^{2+\beta}.
\ee


A combination of \R{ttra}-\R{r2a} yields our anticipated error bound. \qed


\end{proof}

Continuing, we may state the following estimate.

\begin{lemma}
Let $0<s<T.$ Then, for $u\in H,$ we have
$$\mathbb{E}\l\|e^{\Delta W(s)g}u - u\r\|^2 \le Cs.$$
\end{lemma}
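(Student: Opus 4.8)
The plan is to unwind the definition of the nonlinear stochastic semigroup. Setting $v(s) \mathrel{\mathop:}= e^{\Delta W(s)g}u$, this is by definition the solution of \R{ppf1} with $v_0 = u$, so that
$$v(s) = u + \int_0^s g(v(r))\,dW(r),\quad \mathbb{P}-a.s.$$
Subtracting $u$, taking $\|\cdot\|^2$ and expectations, and applying Itô's isometry gives
$$\mathbb{E}\left\|e^{\Delta W(s)g}u - u\right\|^2 = \mathbb{E}\left\|\int_0^s g(v(r))\,dW(r)\right\|^2 = \int_0^s \mathbb{E}\|g(v(r))\|_{L_2^0(H)}^2\,dr,$$
so the estimate reduces to a uniform-in-time bound on $\mathbb{E}\|g(v(r))\|_{L_2^0(H)}^2$.

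For this I would combine two ingredients. First, the linear growth of $g$ in the $L_2^0(H)$ norm: since $g\,:\,H\to L_2^0(H)$ is Lipschitz by Assumption 3, $\|g(w)\|_{L_2^0(H)} \le \|g(0)\|_{L_2^0(H)} + L_g\|w\| \le C(1+\|w\|)$ for all $w\in H$, whence $\mathbb{E}\|g(v(r))\|_{L_2^0(H)}^2 \le C(1+\mathbb{E}\|v(r)\|^2)$. Second, the a priori second-moment bound $\sup_{0\le r\le T}\mathbb{E}\|v(r)\|^2 \le C(1+\|u\|^2)$: writing $\rho(r) = \mathbb{E}\|v(r)\|^2$, squaring the integral equation, taking expectations so that the martingale cross term vanishes, and applying Itô's isometry together with the growth bound yields $\rho(r) \le \|u\|^2 + C\int_0^r(1+\rho(\tau))\,d\tau$, and Gronwall's inequality gives the claim on $[0,T]$. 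As usual, this last computation is first carried out along a localizing sequence of stopping times and then closed by Fatou's lemma, which simultaneously supplies the finiteness $\int_0^s \mathbb{E}\|g(v(r))\|_{L_2^0(H)}^2\,dr < \infty$ needed to legitimately invoke Itô's isometry above; alternatively one may simply cite the standard well-posedness theory for \R{ppf1}.

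Combining the two ingredients,
$$\mathbb{E}\left\|e^{\Delta W(s)g}u - u\right\|^2 = \int_0^s \mathbb{E}\|g(v(r))\|_{L_2^0(H)}^2\,dr \le C\int_0^s (1+\mathbb{E}\|v(r)\|^2)\,dr \le Cs,$$
which is the assertion (with $C$ depending on $\|u\|$ and $T$, consistent with the paper's convention on generic constants). Everything here is a direct application of Itô's isometry and the linear growth of $g$; the only point requiring genuine care is the a priori moment bound, i.e. the localization/Fatou argument that closes the Gronwall estimate without a circular appeal to Itô's isometry.
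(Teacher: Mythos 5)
Your proof is correct and follows essentially the same route as the paper: write $e^{\Delta W(s)g}u - u$ as the stochastic integral from \R{ppf1}, apply It\^o's isometry, and bound the integrand via the linear growth of $g$ together with a uniform second-moment bound on the solution. The only difference is that you establish that moment bound explicitly by a Gronwall/localization argument, whereas the paper simply cites Lemma 2 for it; your version is the more self-contained of the two.
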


\begin{proof}
By recalling \R{ppf1}, we see that
$$e^{\Delta W(s)g}u = u + \int_0^s g(e^{\Delta W(y)g}u)\,dW(y).$$
Thus, by Lemma 2, we have
\bbbb
\mathbb{E}\l\|e^{\Delta W(s)g}u - u\r\|^2 & = & \mathbb{E}\l\|\int_0^s g(e^{\Delta W(y)g}u)\,dW(y)\r\|^2\\
& = & \int_0^s \mathbb{E}\l\|g(e^{\Delta W(y)g}u)\r\|_{L_2^0(H)}^2\,dy
~ \le ~ Cs,
\eeee
which completes our proof.  \qed
\end{proof}

\section{Algorithmic Convergence}
\label{sec:5}

We now state our main result.
\begin{theorem}
Let $u_n = S^n(u_0),$ as defined in \R{s1}, be an approximation to the solution $u(nh) = T^n(u_0)$ 
of \R{e1}-\R{e2}. If $u_0\in\mbox{Dom}(A),$ then for $h$ sufficiently small we have
$$\mathbb{E}\|(S^n-T^n)(u_0)\|^2 \le Ch^{\beta},$$
where $\beta\in(0,1]$ is given in Assumption 4.   
\end{theorem}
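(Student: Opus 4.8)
The plan is to bound the global error by the classical telescoping decomposition (the ``Lady Windermere's fan'') that expresses it as a sum of one-step (local) errors propagated by powers of $S$, and then to feed in the stability estimate of Lemma 3 and the consistency estimate of Lemma 4. The strong order will drop by one --- from $h^{2+\beta}$ locally to $h^{\beta}$ globally --- simply because there are $n\sim T/h$ summands and one passes through the $L^{2}(\Omega)$ norm by Minkowski's inequality.

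First I would write, with the convention $S^{0}=T^{0}=I$,
$$
(S^{n}-T^{n})(u_{0})=\sum_{k=0}^{n-1}\Bigl[\,S^{\,n-k-1}\bigl(S(T^{k}(u_{0}))\bigr)-S^{\,n-k-1}\bigl(T(T^{k}(u_{0}))\bigr)\,\Bigr]=:\sum_{k=0}^{n-1}e_{k},
$$
which is a genuine telescoping sum because $S^{\,n-k-1}\circ S=S^{\,n-k}$ and $T(T^{k}(u_{0}))=T^{k+1}(u_{0})$, so the second term of $e_{k}$ equals the first term of $e_{k+1}$, leaving only $S^{n}(u_{0})-T^{n}(u_{0})$. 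The outer factor $S^{\,n-k-1}$ advances the solution over $[t_{k+1},t_{n}]$ using Wiener increments independent of $\mathcal{F}_{t_{k+1}}$, while $T^{k}(u_{0})$ and the one-step discrepancy $(S-T)(T^{k}(u_{0}))$ are $\mathcal{F}_{t_{k+1}}$-measurable; conditioning on $\mathcal{F}_{t_{k+1}}$ therefore allows the (deterministic-data) estimate of Lemma 3 to be applied to the outer propagation, giving
$$
\mathbb{E}\|e_{k}\|^{2}\le C\,\mathbb{E}\bigl\|(S-T)(T^{k}(u_{0}))\bigr\|^{2},\qquad 0\le k\le n-1,
$$
with $C=e^{TC}$ independent of $n$, $k$, and $h$.

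Next I would check that Lemma 4 applies to each iterate $T^{k}(u_{0})$ with a constant that does not deteriorate with $k$. Since $u_{0}\in\mbox{Dom}(A)$, and $\mbox{Dom}(A)$ is invariant under $f$ and $g$ by Assumption 5 while $e^{(h-s)A}$ preserves $\mbox{Dom}(A)$, the representation \R{tt} shows that $T$ maps $\mbox{Dom}(A)$ into itself; hence $T^{k}(u_{0})\in\mbox{Dom}(A)$ for every $k$. The delicate point is that the constant in Lemma 4 depends a priori on $\|A\,T^{k}(u_{0})\|$ (this quantity enters $z_{1}'$ and $z_{2}'$ in that proof), so one needs an a priori bound $\sup_{kh\le T}\mathbb{E}\|A\,T^{k}(u_{0})\|^{2}<\infty$, i.e.\ the exact iterates must stay bounded in the graph norm of $A$ uniformly in $k$ and $h$; this follows from the regularity estimate of Theorem 1 read in $\mbox{Dom}(A)$, together with the growth conditions of Assumptions 2--3 and the invariance Assumptions 5--6. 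Granting it, Lemma 4 yields $\mathbb{E}\|(S-T)(T^{k}(u_{0}))\|^{2}\le Ch^{2+\beta}$ uniformly in $k$, $n$, $h$.

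Finally I would assemble everything via Minkowski's inequality in $L^{2}(\Omega;H)$:
$$
\Bigl(\mathbb{E}\|(S^{n}-T^{n})(u_{0})\|^{2}\Bigr)^{1/2}\le\sum_{k=0}^{n-1}\bigl(\mathbb{E}\|e_{k}\|^{2}\bigr)^{1/2}\le\sum_{k=0}^{n-1}C^{1/2}h^{1+\beta/2}=nC^{1/2}h^{1+\beta/2}\le C^{1/2}T\,h^{\beta/2},
$$
using $nh\le T$; squaring gives $\mathbb{E}\|(S^{n}-T^{n})(u_{0})\|^{2}\le CT^{2}h^{\beta}=Ch^{\beta}$, which is the claim (the ``$h$ sufficiently small'' hypothesis is needed only to keep the one-step operators and the constants above under control). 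The main obstacle is not the telescoping bookkeeping, which is routine once Lemmas 3--6 are available, but precisely the uniformity in $k$ of the local consistency constant: Lemma 4 is a pointwise statement for a single $u\in\mbox{Dom}(A)$, and to add up $O(h^{-1})$ such local errors one must first establish that the exact iterates $T^{k}(u_{0})$ remain bounded in the graph norm of $A$ uniformly in $k$ and $h$ --- the genuine technical core of the argument. The residual loss from $h^{2+\beta}$ to $h^{\beta}$ is the unavoidable price of telescoping through the $L^{2}(\Omega)$ norm, and it is exactly this mechanism that yields the ``half order at best'' phenomenon stated in the abstract.
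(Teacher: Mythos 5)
Your proposal is correct and follows essentially the same route as the paper: the identical telescoping (Lady Windermere's fan) decomposition, Lemma 3 for the stability of the propagated one-step errors, Assumption 5 plus Lemma 4 for the uniform $O(h^{2+\beta})$ local consistency, and a final summation that loses a factor $n^{2}$ (you use Minkowski in $L^{2}(\Omega;H)$ where the paper uses Cauchy--Schwarz on the sum, but both give $n^{2}h^{2+\beta}=O(h^{\beta})$). Your two side remarks --- the conditioning on $\mathcal{F}_{t_{k+1}}$ to justify factoring the stability constant out of the expectation, and the need for the consistency constant to be uniform over the iterates $T^{k}(u_{0})$ in the graph norm of $A$ --- are points the paper passes over silently, so they are welcome even though your appeal to Theorem 1 (which only covers $\mbox{Dom}((-A)^{\alpha/2})$ for $\alpha<1$) does not by itself settle the latter.
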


\begin{proof}
Recall \R{tt}. It follows immediately that
\bbbb
T^n(u_0) &=& e^{nhA}u_0 + \int_0^{nh}e^{(nh-s)A}f(u(s))\,ds\\
&&~~~~~~~~~~~~~~~ + \int_0^{nh}e^{(nh-s)A}g(u(s))\,dW(s),\quad \mathbb{P}-a.s.
\eeee
We now have the following representation of the difference
\bb{diff1}
(S^n-T^n)(u_0) = \sum_{j=0}^{n-1} \l(S^{n-j}T^j - S^{n-j-1}T^{j+1}\r)(u_0).
\ee
By taking the norm and expectation of \R{diff1}, we observe that
\bbb
\mathbb{E}\|(S^n-T^n)(u_0)\|^2 & = & \mathbb{E}\l\|\sum_{j=0}^{n-1}\l(S^{n-j}T^j - S^{n-j-1}T^{j+1}\r)(u_0)\r\|^2\nnn\\
& \le & (n-1)\sum_{j=0}^{n-1}\mathbb{E}\l[L[S^{n-j-1}]\r]^2\mathbb{E}\|(S-T)(T^j(u_0))\|^2.\label{diff2}
\eee
If $u_0\in\mbox{Dom}(A),$ then it follows that $T^j(u_0)\in\mbox{Dom}(A),\ 0\le j\le n-1,$ due to Assumption 5. Therefore, we have
\bb{diffbnd1}
\mathbb{E}\|(S-T)(T^j(u_0))\|^2 \le Ch^{2+\beta},
\ee
for $0\le j\le n-1.$ Recall Lemma 3. We find that
\bb{diffbnd2}
\mathbb{E}\l[L[S^{n-j-1}]\r]^2 \le \mathbb{E}\l[L[S]\r]^{2(n-j-1)} \le e^{2(n-j-1)hC},
\ee
where $C$ is independent of $h,$ $n,$ and $j.$ Combining \R{diffbnd1} and \R{diffbnd2} gives
$$
\mathbb{E}\|(S^n-T^n)(u_0)\|^2 \le (n-1)\sum_{j=0}^{n-1} e^{2(n-j-1)hC}Ch^{2+\beta} \le Ch^\beta.
$$
\qed
\end{proof}

From Theorem 2, we see that the maximal mean square convergence rate is given by $\beta/2.$ Since $\beta\in (0,1],$ it follows that the maximal convergence rate is $1/2.$ Such a convergence rate is recovered when \R{e1}-\R{e2} is driven by trace class noise.





\bibliographystyle{spmpsci}
\bibliography{Springer_Bib}

\newcommand{\noop}[1]{}
\begin{thebibliography}{10}
\providecommand{\url}[1]{{#1}}
\providecommand{\urlprefix}{URL }
\expandafter\ifx\csname urlstyle\endcsname\relax
  \providecommand{\doi}[1]{DOI~\discretionary{}{}{}#1}\else
  \providecommand{\doi}{DOI~\discretionary{}{}{}\begingroup
  \urlstyle{rm}\Url}\fi

\bibitem{Blanes_book}
Blanes, S., Casas, F.: A concise introduction to geometrical numerical
  integration, 1st edn.
\newblock CRC Press (2016)

\bibitem{Burrage1}
Burrage, K., Burrage, P.M.: High strong order methods for non-commutative
  stochastic differential equations systems and the {M}agnus formula.
\newblock Physica D: Nonlinear Phenomena \textbf{133}(1), 34--48 (1999)

\bibitem{Iserles1}
Casas, F., Iserles, A.: Explicit {M}agnus expansions for nonlinear equations.
\newblock Journal of Physics A: Mathematical and General \textbf{39}(19), 5445
  (2006).
\newblock \urlprefix\url{http://stacks.iop.org/0305-4470/39/i=19/a=S07}

\bibitem{chow2014stochastic}
Chow, P.L.: Stochastic partial differential equations.
\newblock CRC Press (2014)

\bibitem{cox2010convergence}
Cox, S., Van~Neerven, J.: Convergence rates of the splitting scheme for
  parabolic linear stochastic cauchy problems.
\newblock SIAM Journal on Numerical Analysis \textbf{48}(2), 428--451 (2010)

\bibitem{pz_2014}
Da~Prato, G., Zabczyk, J.: Stochastic Equations in Infinite Dimensions, 2 edn.
\newblock Encyclopedia of Mathematics and its Applications. Cambridge
  University Press (2014).
\newblock \doi{10.1017/CBO9781107295513}

\bibitem{debussche2011weak}
Debussche, A.: Weak approximation of stochastic partial differential equations:
  the nonlinear case.
\newblock Mathematics of Computation \textbf{80}(273), 89--117 (2011)

\bibitem{hairer2006geometric}
Hairer, E., Lubich, C., Wanner, G.: Geometric numerical integration:
  structure-preserving algorithms for ordinary differential equations, vol.~31.
\newblock Springer Science \& Business Media (2006)

\bibitem{Hansen2012}
Hansen, E., Kramer, F., Ostermann, A.: A second-order positivity preserving
  scheme for semilinear parabolic problems.
\newblock Applied Numerical Mathematics \textbf{62}(10), 1428--1435 (2012)

\bibitem{Hansen2008}
Hansen, E., Ostermann, A.: Dimension splitting for evolution equations.
\newblock Numerische Mathematik \textbf{108}(4), 557--570 (2008).
\newblock \doi{10.1007/s00211-007-0129-3}.
\newblock \urlprefix\url{https://doi.org/10.1007/s00211-007-0129-3}

\bibitem{henry2006geometric}
Henry, D.: Geometric theory of semilinear parabolic equations, vol. 840.
\newblock Springer (2006)

\bibitem{iserles_2008}
Iserles, A.: A First Course in the Numerical Analysis of Differential
  Equations, 2 edn.
\newblock Cambridge Texts in Applied Mathematics. Cambridge University Press
  (2008).
\newblock \doi{10.1017/CBO9780511995569}

\bibitem{Jahnke2000}
Jahnke, T., Lubich, C.: Error bounds for exponential operator splittings.
\newblock BIT Numerical Mathematics \textbf{40}(4), 735--744 (2000)

\bibitem{kato1967nonlinear}
Kato, T.: Nonlinear semigroups and evolution equations.
\newblock Journal of the Mathematical Society of Japan \textbf{19}(4), 508--520
  (1967)

\bibitem{lord2012stochastic}
Lord, G.J., Tambue, A.: Stochastic exponential integrators for the finite
  element discretization of spdes for multiplicative and additive noise.
\newblock IMA Journal of Numerical Analysis \textbf{33}(2), 515--543 (2012)

\bibitem{Malham1}
Malham, S.J.A., Wiese, A.: Stochastic {L}ie group integrators.
\newblock SIAM J. Sci. Comput. \textbf{30}(2), 597--617 (2008)

\bibitem{misawa2000numerical}
Misawa, T.: Numerical integration of stochastic differential equations by
  composition methods.
\newblock 数理解析研究所講究録 \textbf{1180}, 166--190 (2000)

\bibitem{Misawa1}
Misawa, T.: A {L}ie algebraic approach to numerical integration of stochastic
  differential equations.
\newblock SIAM J. Sci. Comput. \textbf{23}(3), 866--890 (2006)

\bibitem{Josh2}
Padgett, J.L., Sheng, Q.: Modern Mathematical Methods and High Performance
  Computing in Science and Technology, chap. On the Stability of a Variable
  Step Exponential Splitting Method for Solving Multidimensional
  Quenching-Combustion Equations, pp. 155--167.
\newblock Springer (2016)

\bibitem{Josh1}
Padgett, J.L., Sheng, Q.: {On the positivity, monotonicity, and stability of a
  semi-adaptive LOD method for solving three-dimensional degenerate {K}awarada
  equations}.
\newblock J. Math. Anal. Appls \textbf{439}, 465--480 (2016)

\bibitem{Josh3}
Padgett, J.L., Sheng, Q.: Numerical solution of degenerate stochastic kawarada
  equations via a semi-discretized approach.
\newblock Applied Mathematics and Computation \textbf{325}, 210 -- 226 (2018).
\newblock \doi{https://doi.org/10.1016/j.amc.2017.12.034}.
\newblock
  \urlprefix\url{https://www.sciencedirect.com/science/article/pii/S0096300317308937}

\bibitem{prevot2007concise}
Pr{\'e}v{\^o}t, C., R{\"o}ckner, M.: A concise course on stochastic partial
  differential equations, vol. 1905.
\newblock Springer (2007)

\bibitem{Sheng1989}
Sheng, Q.: Solving linear partial differential equations by exponential
  splitting.
\newblock IMA Journal of numerical analysis \textbf{9}(2), 199--212 (1989)

\bibitem{Sheng1994}
Sheng, Q.: Global error estimates for exponential splitting.
\newblock IMA Journal of Numerical Analysis \textbf{14}(1), 27--56 (1994)

\bibitem{Trotter1959}
Trotter, H.: On the product of semi-groups of operators.
\newblock Proceedings of the American Mathematical Society \textbf{10}(4),
  545--551 (1959)

\end{thebibliography}

\end{document}